\tikzset{
	>=stealth',
	punktchain/.style={
		rectangle, 
		rounded corners, 
		draw=black, very thick,
		text width=9em, 
		minimum height=5em, 
		text centered, 
		on chain},
	line/.style={draw, thick, <-},
	element/.style={
		tape,
		top color=white,
		bottom color=blue!50!black!60!,
		minimum width=5em,
		draw=blue!40!black!90, very thick,
		text width=9em, 
		minimum height=5em, 
		text centered, 
		on chain},
	every join/.style={->, thick,shorten >=1pt},
	decoration={brace},
	tuborg/.style={decorate},
	tubnode/.style={midway, right=2pt},
}
\newcommand{\dmph}[1]{}
\newtheorem{theorem}{Theorem}[section]
\newtheorem{corollary}[theorem]{Corollary}
\newtheorem{lemma}[theorem]{Lemma}
\newtheorem{proposition}[theorem]{Proposition}
\newtheorem*{conjecture}{Conjecture}
\theoremstyle{definition}
\newtheorem{definition}[theorem]{Definition}
\newtheorem*{remark}{Remark}
\newtheorem{example}[theorem]{Example}
\newcommand{\ep}{\varepsilon}
\newcommand{\ext}[1]{\mathrm{ext}\left(#1\right)}
\newcommand{\RR}{\mathbb{R}}
\newcommand{\NN}{\mathbb{N}}
\newcommand{\ZZ}{\mathbb{Z}}
\newcommand{\cd}{\mathcal{D}}
\newcommand{\cS}{\mathcal{S}}
\newcommand{\cg}{\mathcal{G}}
\newcommand{\cP}{\mathcal{P}}
\newcommand{\cE}{\mathcal{E}}
\newcommand{\cB}{\mathcal{B}}
\newcommand{\cU}{\mathcal{U}}
\newcommand{\cH}{\mathcal{H}}
\newcommand{\cK}{\mathcal{K}}
\newcommand{\cm}{\mathcal{M}}
\newcommand{\sA}{\mathscr{A}}
\newcommand{\sss}{\mathscr{S}}
\newcommand{\sC}{\mathscr{C}}
\newcommand{\cM}{\mathcal{M}}
\newcommand{\spp}{\mathscr{P}}
\newcommand{\sv}{\mathscr{V}}
\newcommand{\sg}{\mathscr{G}}
\newcommand{\al}{\alpha}
\newcommand{\var}{\mathrm{Var}}
\title[Intermediate entropies and approximate
      product property]
      {Ergodic measures of intermediate entropies for dynamical systems with the approximate product property}
\author[Peng Sun]{}
\subjclass[2010]{Primary: 37A35, 37C50.
        Secondary: 37B40, 37C40, 37D25, 37D30, 37D35.}
 \keywords{approximate product property, asymptotically entropy expansive, ergodic
 measure, intermediate entropy, pressure, entropy dense, Lyapunov exponent, specification, gluing orbit.  }
 \email{sunpeng@cufe.edu.cn}
\begin{document}

\maketitle\ 


\centerline{\scshape Peng Sun}
\medskip
{\footnotesize
 \centerline{China Economics and Management Academy}
   \centerline{Central University of Finance and Economics}
   \centerline{Beijing 100081, China}
} 

\bigskip

\begin{abstract}
	For a dynamical system satisfying the approximate product property and asymptotically
	entropy expansiveness, we characterize a delicate structrue of the space of invariant measures: The ergodic measures of intermediate entropies and intermediate pressures
	are generic in certain subspaces. This proves a conjecture of Katok for a broad class
	of systems and extends a sequence of known results.

\end{abstract}


\section{Introduction}

It is a question with a long history  whether positive topological entropy
implies a rich structure of the space of invariant measures.
Parry asked if a strictly ergodic (i.e. minimal and uniquely ergodic) system must have zero topological entropy.
The answer is negative and 
many $C^0$ counterexamples have been found
(e.g. \cite{BC,GW,HK}). 
However, it seems that we may expect a positive answer for smooth systems, 
as conjectured by Herman,
because in this case positive topological entropy implies 
existence of nonzero Lyapunov exponents, from which we
can obtain
some hyperbolic structure.
In the
seminal work \cite{Ka80} Katok showed that for
$C^{1+\al}$ diffeomorphisms
in dimension 2, positive topological entropy implies existence of horseshoes.
It 
follows that the system
has ergodic measures of arbitrary intermediate metric entropies. Katok believed
that this holds for any smooth system (of sufficient regularity) in any dimension.

\begin{conjecture}[Katok]
For every $C^2$ diffeomorphism $f$ on a compact Riemannian manifold $X$, the set
$$\cH(X,f):=\bigl\{h_\mu(f): \mu\text{ is an ergodic measure for }(X,f)\bigr\}$$
includes $\bigl[0,h(f)\bigr)$.
\end{conjecture}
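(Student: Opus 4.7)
The plan is to apply the paper's main theorem on a suitable invariant hyperbolic subset produced by smooth ergodic theory. Fix any target value $h_0 \in [0, h(f))$ that we wish to realize as an ergodic entropy.

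First, by the variational principle together with Ruelle's inequality, I would select an ergodic $f$-invariant measure $\mu$ with $h_\mu(f) > h_0$; since $h_\mu(f) > 0$, Ruelle's inequality forces at least one positive Lyapunov exponent. Second, assuming $\mu$ is hyperbolic, I would invoke Katok's horseshoe theorem for $C^{1+\al}$ diffeomorphisms to construct a compact $f$-invariant uniformly hyperbolic set $\Lambda \subset X$ with $h(f|_\Lambda) > h_0$. Third, $(\Lambda, f|_\Lambda)$ is topologically conjugate to a subshift of finite type, so it is expansive (hence asymptotically $h$-expansive) and satisfies the classical specification property (hence the approximate product property). The main theorem of the present paper applied to $(\Lambda, f|_\Lambda)$ then yields an ergodic measure $\nu$ supported on $\Lambda$ with $h_\nu(f) = h_0$, placing $h_0$ in $\cH(X,f)$.

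The hard part is the hyperbolicity hypothesis on $\mu$. In dimension two, every ergodic measure of positive entropy for a $C^{1+\al}$ diffeomorphism is automatically hyperbolic (Ruelle's inequality, combined with positivity of entropy, excludes both exponents from being nonpositive), so the argument closes and recovers Katok's original surface result. In higher dimensions, the ergodic measure supplied by the variational principle can have vanishing Lyapunov exponents, and Katok's horseshoe construction does not directly produce a uniformly hyperbolic set whose topological entropy approximates $h_\mu(f)$. To sidestep this obstruction one would like to verify the approximate product property and asymptotic entropy expansiveness directly on $(X,f)$, reducing the whole conjecture to the paper's main theorem in one stroke; but no general mechanism for this is currently known for arbitrary $C^2$ diffeomorphisms, and this gap is precisely the reason the conjecture remains open outside surfaces. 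An intermediate attack would be to work on Pesin blocks at small scale, where a localized specification-type property might be extractable from Lyapunov charts, and to combine such local constructions with the entropy-density machinery behind the paper's main theorem to piece together ergodic measures of prescribed intermediate entropy.
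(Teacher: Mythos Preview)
The statement you are attempting to prove is labeled \emph{Conjecture} in the paper, not a theorem, and the paper offers no proof of it. The paper's main results (Theorems~\ref{alresidual} and~\ref{interent}) verify the conclusion of Katok's conjecture only under the additional hypotheses of approximate product property and asymptotic entropy expansiveness; they do not address arbitrary $C^2$ diffeomorphisms. So there is no ``paper's own proof'' to compare against.

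Your proposal is an accurate diagnosis of the situation rather than a proof. The surface case you sketch is correct and is exactly Katok's original argument from \cite{Ka80}: positive entropy forces hyperbolicity of $\mu$ in dimension two, horseshoes approximate the entropy, and the subshift structure delivers intermediate values. You correctly identify the genuine obstruction in higher dimensions---the measure supplied by the variational principle need not be hyperbolic, so Katok's horseshoe theorem does not apply---and you are right that this is precisely why the conjecture remains open. Your suggested workarounds (verifying approximate product property directly on $(X,f)$, or extracting specification-like behavior on Pesin blocks) are reasonable research directions but are not proofs; the paper itself makes no claim that either can be carried out for general $C^2$ diffeomorphisms.

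In short: there is no gap in your reasoning about what would work and what fails, but what you have written is a survey of the difficulty, not a proof, and the paper does not purport to resolve the conjecture either.
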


We say that a system has the \emph{intermediate entropy property}
 if it verifies
Katok's conjecture.
Progress on the conjecture was made by the author \cite{Sun09,
Sun10, Sun12} for certain skew products and toral automorphisms. 
Ures \cite{Ures} and Yang and Zhang
\cite{YZ}  proved
the conjecture for some
partially hyperbolic diffeomorphisms
with one-dimensional center. 
In the remarkable work \cite{QS}, Quas and Soo showed that
a system is universal, which implies 
the intermediate entropy property,
if it satisfies asymptotic entropy expansiveness, the almost weak specification
property
and the small boundary property. 
Their result was improved
by Burguet \cite{Burg} 
who showed that the almost weak specification property itself is sufficient
for universality.
In the joint work \cite{GSW} of 
Guan, Wu and the author,
we showed
that certain homogeneous systems has the almost weak specification, hence
proved Katok's conjecture by applying the previous results.
However, universality is not implied by the approximate product property.
See Example \ref{nouniver}.
In \cite{KKK}, Konieczny, Kupsa and Kwietniak proved Katok's conjecture
for hereditary shifts,
by showing that the set of ergodic measures is arcwise connected with respect to
a special metric under which the entropy function is continuous.
Recently, the author has developed an approach based on uniqueness
of equilibrium states and proved Katok's conjecture for a class of
Ma\~n\'e systems \cite{Sun21}.
Moreover, a flow version of Katok's conjecture
has been proved by Li, Shi, Wang and Wang for star flows \cite{LSWW}.

In this article, we prove Katok's conjecture for
asymptotic entropy expansive  systems 
with the approximate product property.
We  actually characterize a delicate 
structure of the space of invariant measures,
which presents much stronger conclusions than intermediate entropy property. 
The 
approximate product property was first introduced by Pfister and Sullivan
\cite{PfSu}, which is almost the weakest one 
among the so-called \emph{specification-like
properties} \cite{KLO}. 
Various orbit-tracing properties are closely related to hyperbolicity and have
played import roles in studying smooth systems.
In 1971, Bowen introduced the notion of specification
to study periodic points and
invariant measures for Axiom A diffeomorphisms \cite{Bowen}.
Since then, a number of variations of the specification property have been 
introduced
to study broader classes of dynamical systems, which represent weaker forms of
hyperbolicity. Based on these specification-like properties,  a bunch of 
interesting results were
successfully achieved.
The author have also witnessed the power of these properties in a sequence
of works 
\cite{Sun19, Sunct, Sunze, Sununierg, Sunnd}.


Let $X$ be a compact metric space and $f:X\to X$ be a continuous map. 
Then we say that $(X,f)$ is 
a \emph{topological dynamical system}
or just a \emph{system}.
Denote by $\cm(X,f)$ the space of its invariant measures
and by $\cm_e(X,f)$ the subset of ergodic ones. 
Let 
$\Phi$ be an asymptotically additive 
potential for $(X,f)$ (see Definition \ref{defaapot}).
Denote by $h(f)=h(X,f)$ and $P(f,\Phi)=P(X,f,\Phi)$ the topological entropy
and the topological pressure.
Denote by $h_\mu(f)$  and
$P_\Phi(\mu)$
the entropy
and the pressure
of $\mu\in\cm(X,f)$.
Let
$$P_{\inf}(f,\Phi):=\inf\bigl\{P_\Phi(\mu):\mu\in\cm(X,f)\bigr\}.$$
A subset $\Lambda$ of $X$ is called \emph{$f$-invariant} if $f(\Lambda)\subset\Lambda$.
When $\Lambda$ is compact and $f$-invariant, $(\Lambda,f)$ is also a
topological dynamical system, hence the notations like $\cm(\Lambda,f)$
make sense.
For convenience, please allow us to assign names to 
the following properties.
The names are inspired by the terminologies 
\emph{entropy-approachable}
and \emph{entropy-dense} introduced in
\cite[Definition 2.7]{PfSu}.

\begin{definition}\label{defea}
Let $(X,f)$ be a topological dynamical system.
\begin{enumerate}
\item Given $\mu\in\cm(X,f)$, we say that $\mu$ is \emph{almost
entropy-approximable} (by compact invariant sets of intermediate entropies), if
for every neighborhood $U$ of $\mu$, every $h\in\bigl(0,h_\mu(f)\bigr)$ and every $\ep, \beta>0$, 
there are a compact $f$-invariant
set $\Lambda$ and $\gamma\in(0,\ep)$ such that
$$\cm(\Lambda, f)\subset U, \; h(\Lambda,f)>h\text{ and }
h(\Lambda, f, \gamma)<h+\beta,$$
\item Given $\mu\in\cm(X,f)$, we say that $\mu$ is \emph{entropy-approximable} 
(by compact invariant sets of intermediate entropies), if
for every neighborhood $U$ of $\mu$, every $h\in\bigl(0,h_\mu(f)\bigr)$ and every $\beta>0$,
there is a compact $f$-invariant
set $\Lambda$ such that
$$\cm(\Lambda, f)\subset U\text{ and }
h<h(\Lambda, f)<h+\beta.$$
\item We say that the system $(X,f)$ is \emph{entropy-generic}, if
for every $\al\in\bigl[0, h(f)\bigr)$, the set
        $$\cm_e(X,f,\al):=\bigl\{\mu\in\cm_e(X,f): h_\mu(f)=\alpha\bigr\}$$
        is residual 
        in the 
        subspace
        $$
        \cm^\al(X,f):=\bigl\{\mu\in\cm(X,f): h_\mu(f)\ge\alpha\bigr\}.$$
\item 
We say that 
        $(X,f,\Phi)$ is \emph{pressure-generic}, if
        for every $\al\in\bigl(P_{\inf}(f,\Phi), P(f,\Phi)\bigr)$, the set
        $$\spp_e(X,f,\Phi,\al):=
        \bigl\{\mu\in\cm_e(X,f): P_\Phi(\mu)=\alpha\bigr\}$$
        is residual 
        in the 
        subspace
        $$\spp^\al(X,f,\Phi):=\{\mu\in\cm(X,f):\chi_\Phi(\mu)\le\al\le P_\Phi(\mu)\}.$$
        
\end{enumerate}
\end{definition}

\dmph{We remark that pressure-genericity may fail when $\al=P_{\inf}(f,\Phi)$.
 So it does not imply entropy-genericity just by definition. See
Example \ref{expinf}.
}



The following is our key theorem that only assumes the approximate product property.

\begin{theorem}
\label{thaea}
Let $(X,f)$ be a system with the approximate product property.
Then every  invariant measure  $\mu\in\cm(X,f)$
is almost entropy-approximable.
\end{theorem}

Our main result 
follows 
if in addition the system is
asymptotically entropy expansive.

\begin{theorem}\label{thmain}
Let $(X,f)$ be an asymptotically entropy expansive system
with the approximate product property.
Then the following hold:
\begin{enumerate}
\item\label{thit1} Every  invariant measure  $\mu\in\cm(X,f)$
is entropy-approximable.
\item\label{thit2} The system $(X,f)$ is entropy-generic.
\item\label{thit3} 
Let $\Phi$ be any asymptotically additive potential for $(X,f)$.
Then $(X,f, \Phi)$ is pressure-generic.
\end{enumerate}
\end{theorem}

We remark that after the first preprint of the article was posted on arXiv, the author
was told that
Li and Oprocha \cite{LO} had obtained a similar result to 
Theorem \ref{thmain} \eqref{thit2} under a stronger assumption that the system
is topologically transitive
and has the shadowing property, and the entropy map is 
upper semi-continuous. They used a different method
focusing on odometers.
Another remark is that Cueno \cite{Cueno} has shown that every
asymptotically additive potential is equivalent
to a standard one. However, asymptotically additive condition
remains relevant for applications.

By investigating the structure of $\cm(X,f)$ characterized by
Theorem \ref{thmain}, we have also obtained the following corollaries:

\begin{corollary}\label{cor_nbhd}
Let $(X,f)$ be an asymptotically entropy expansive system
with the approximate product property. 
Let $\Phi$ be an asymptotically additive potential for $(X,f)$.
For $U\subset\cm(X,f)$,
denote
$$\cH(X,f,U):=\bigl\{h_\nu(f): \nu\in U\cap\cm_e(X,f)\bigr\}$$
and
$$\cP(X,f, \Phi, U):=\{P_\Phi(\nu): \nu\in U\cap\cm_e(X,f)\}.$$
Then the following hold:
\begin{enumerate}
\item\label{itnb1} For every $\mu\in\cm(X,f)$ and every neighborhood $U$ of $\mu$, we have
$$\begin{cases}
        \cH(X,f,U)\supset\bigl[0, h_\mu(f)\bigr], & \text{if }h_\mu(f)<h(f);\\
        \cH(X,f,U)\supset\bigl[0, h_\mu(f)\bigr), & \text{if }h_\mu(f)=h(f).
        \end{cases}$$
\item\label{itnb2} For every $\mu\in\cm(X,f)$ and every neighborhood $U$ of $\mu$, we have
$$\begin{cases}
        \cP(X,f, \Phi, U)\supset\bigl[\chi_\Phi(\mu), P_\Phi(\mu)\bigr],
         & \text{if }P_\Phi(\mu)<P(f,\Phi);\\
        \cP(X,f, \Phi, U)\supset\bigl[\chi_\Phi(\mu), P_\Phi(\mu)\bigr), & \text{if }P_\Phi(\mu)=P(f,\Phi),
        \end{cases}$$
        where $\chi_\Phi(\mu)$ is the Lyapunov exponent of $\mu$ (see Subsection 
        \ref{subile}).
\item\label{itnb3} If there is $\mu_0\in\cm(X,f)$ such that
        $P_\Phi(\mu_0)=P_{\inf}(f,\Phi)$,
        then we must have
        $h_{\mu_0}(f)=0$. That is, the infimum of $P_\Phi$
can either not be obtained, or only be obtained at 
a measure of zero entropy. 
\end{enumerate}

\end{corollary} 

We remark that it is possible that $h_\mu(f)\notin \cH(X,f,U)$ when $h_\mu(f)=h(f)$. This
happens if $(X,f)$ has multiple ergodic measures of maximal entropy and this
is compatible with the approximate product property. 
See Example \ref{nomaxent}.
It is clear that if in addition $(X,f)$ is intrinsically ergodic
(i.e. it has exactly one ergodic measure of maximal entropy) then
$$\cH(X,f,U)\supset[0, h_\mu(f)]\text{ for every }\mu\in\cm(X,f).$$


The approximate product property, as well as asymptotic entropy expansiveness,
is widely satisfied by 
many classical systems that span a broad spectrum of interests.
The following is 
a summarization of just a few
known results.

\begin{proposition}\label{pr_app}
The following systems has the approximate product property: 
\begin{enumerate}[(i)]
\item\label{piti} Some symbolic systems, which are expansive, including:
\begin{itemize}

\item
Transitive sofic shifts, including all 
transitive
subshifts of finite types
\cite[Corollary 40]{KLO};
\item $\beta$-shifts
\cite{PfSu};

\end{itemize}

\item\label{pitii} Some automorphisms on compact groups, which are asymptotically entropy expansive,
including:
\begin{itemize}
\item Ergodic toral automorphisms
\cite{Marc};
\item An automorphism of a finite-dimensional compact metric abelian group with finite topological entropy,
whose Koopman
representation has no finite orbits on the character group other than the
trivial character
\cite[Theorem 11]{QS};
\item A homogeneneous system $(G/\Gamma,g)$, where $G$ is connected semisimple Lie group without compact factors,
$\Gamma$ is an irreducible cocompact lattice of G and $g\in G$ is non-quasiunipotent
\cite{GSW};
\end{itemize}
\item The restriction, of every $C^0$-generic map $f$ on a compact
Riemannian manifold, to a single chain-recurrent class $\Omega$ for $f$
\cite[Corollary 2]{BTV};

\item Transitive graph maps
\cite[Corollary 40]{KLO};

\item Certain partially hyperbolic diffeomorphisms,
e.g. topologically transitive time-1 maps of Anosov flows;

\item A topologically transitive system with the shadowing property \cite{TS};

\item A product of a systems with the approximate product property and a system
with the tempered specification property (see Proposition \ref{prodapp});
\item Factors and conjugates of above systems (see Proposition \ref{aeesys} and Proposition
\ref{appfactor}).
\end{enumerate}
\end{proposition}


Theorem \ref{thaea} applies to all systems
listed in Proposition \ref{pr_app}.
Theorem \ref{thmain} directly applies to 
 the systems of the categories \eqref{piti} and \eqref{pitii}.
It applies to any other system in the list
if in addition the system is asymptotically entropy expansive.

Theorem \ref{thmain} \eqref{thit2} implies that
the intermediate entropy property holds
for asymptotically
entropy expansive systems with the approximate product property.
Corollary \ref{cor_nbhd} \eqref{itnb1} shows further that
ergodic measures of intermediate
entropies exist in every neighborhood of
an invariant measure.
With Proposition \ref{pr_app}, 
our results 
has 
covered many known results on Katok's conjecture, including 
\cite{GSW},
\cite[Corollary C (2)]{LO},
\cite[Section 3.2]{QS},
\cite{Sun12} and
\cite[Theorem 1.3]{Sunze}, 
as well as providing
a uniform explanation for them.
Similar ideas can also be applied to study more systems,
e.g. \cite{Sunct} for systems admitting a Climenhaga-Thompson
decomposition.

In \cite{Sig}, Sigmund proved various generic properties of invariant measures
for Axiom A diffeomorphisms. Similar results are obtained in \cite{GK} for
systems satisfying certain properties related to periodic points.
Theorem \ref{thmain} \eqref{thit2} extends a result of Sigmund
to our setting that 
 $\cm_e(X,f,0)$, the
set of ergodic measures of zero entropy, is
residual in $\cm(X,f)$. This also provides a partial answer to a question raised in \cite{BV}
by Bomfim and Varandas for systems with the gluing orbit property.
We remark that, 
compared with previous
results, in our setting 
the system may have no periodic points.

Let us get back to Parry's question.
In \cite{Sunze}, we have shown that a system has the gluing orbit
property and zero topological entropy if and only if it is strictly ergodic and equicontinuous.
We note that there are
subshifts 
(hence expansive and not equicontinuous) that has the approximate
product property and zero topological entropy while they 
are not even topologically transitive. 
See Example \ref{exnoneq} and \ref{exnontrans}.
In Subsection \ref{appminimal}, we shall prove the
following corollary:
\begin{corollary}\label{cor_min}
Let $(X,f)$ be a minimal system with the approximate product
property. Then $(X,f)$ must be uniquely ergodic and $h(f)=0$.
\end{corollary}


Further investigation in \cite{Sununierg} shows that
there is a dichotomy on the structure of $\cm(X,f)$,
for a system with the approximate product property, which is 
completely determined
by the topological entropy:
$$\begin{cases}
h(f)=0 \iff {}&{} \text{$\cm(X,f)$ is a singleton}.\\
h(f)>0 \iff {}&{} \text{$\cm(X,f)$ is a Poulsen simplex}.
\end{cases}$$
This 
complements Katok's conjecture in the case
that the system has zero topological entropy. 
\dmph{Furthermore, the fact that $\cm(X,f)$ is either a singleton or a Poulsen simplex
implies that $\cm_e(X,f)$ is arcwise connected. It follows that
every system with the approximate product property has intermediate
Lyapunov exponents for asymptotically additive
functions, which 
extends the result of \cite{TWW}.
See Subsection \ref{subile}.


}

Notions and results in this article naturally extends to the continuous-time case. The proof
can be carried out with a little extra effort, namely a discretization argument as in the proof of \cite[Lemma 5.10]{CLT}.

The article is organized as follows: 
We provide some preliminaries 
in Section \ref{secpre}.
Then we prepare some lemmas about empirical measures in Section \ref{secem}.
We 
prove Theorem \ref{thaea} in Section \ref{ldsection}
and discuss its corollaries concerning minimality in Subsection \ref{appminimal}.
We prove Conclusion \eqref{thit1} and \eqref{thit2}
of Theorem \ref{thmain} in Section \ref{secintent}.
We discuss 
Lyapunov exponents and pressures for
asymptotically additive potentials in Section \ref{secaap}
and prove Conclusion \eqref{thit3} 
of Theorem \ref{thmain} 
in Subsection \ref{subpregen}.
Finally, we present
some examples related to our results 
in Section \ref{secexamp}.

 \section{Preliminaries}\label{secpre}
In what follows, we always assume that $(X,f)$ is a topological
dynamical system.
We shall denote by 
$\ZZ^+$ the set of all positive integers and by $\NN$ the set of all nonnegative integers, i.e.
$\NN=\ZZ^+\cup\{0\}$. 
For $n\in\ZZ^+$,  denote
$$\ZZ_n:=\{0,1,\cdots, n-1\}\text{ and }\Sigma_n:=\{0,1,\cdots,n-1\}^{\ZZ^+}.$$
Readers may find more details on entropies and invariant measures
in \cite{Walters}.

\subsection{Topological entropy and expansiveness}
\begin{definition}
Let $K$ be a subset of $X$.
For $n\in\ZZ^+$ and $\ep>0$, a subset $E\subset K$ 
is called an \emph{$(n,\ep)$-separated set} in $K$ if
for any distinct points $x,y$ in $E$, 
we have
$$d_n^f(x,y):=\max\Bigl\{d\bigl(f^k(x),f^k(y)\bigr): k\in\ZZ_n\Bigr\}>\ep.$$
Denote by $s(K,n,\ep)$ the maximal cardinality of an $(n,\ep)$-separated subset
of $K$. 
Let
$$h(K,f,\ep):=\limsup_{n\to\infty}\frac{\ln s(K,n,\ep)}{n}.$$
Then the \emph{topological entropy} of $f$ on $K$ is defined as
$$h(K,f):=\lim_{\ep\to0}h(K,f,\ep).$$
In particular, $h(f):=h(X,f)$ is the topological entropy of the system $(X,f)$.
\end{definition}

For each $n\in\ZZ^+$, $d_n^f$ is a metric on $X$.
Note that $h(K,f,\ep)$ grows as $\ep$ tends to $0$. So we actually have
\begin{equation}\label{eqhsup}
h(K,f)=\sup\bigl\{h(K,f,\ep):\ep>0\bigr\}.
\end{equation}


\begin{definition}
Let $\ep>0$.
A set of the form
$$B_n(x,\ep)=\bigl\{y\in X: d_n^f(x,y)<\ep\bigr\}$$
is called an \emph{$(n,\ep)$-ball} of $(X,f)$. 
A subset $E$ of $X$ is called an
\emph{$(n,\ep)$-spanning set} if
$$X=\bigcup_{x\in E}B_n(x,\ep).$$

\end{definition}

Denote by $r(n,\ep)$ 
the minimal cardinality
of an $(n,
\ep)$-spanning subset of $X$. In particular, we denote $r(\ep):=r(1,\ep)$.
By \cite[Lemma 2.1]{Bowen2}, we have
\begin{equation}\label{finent}
r(n,\ep)\le r(\ep)^n\text{ for every }n\in\ZZ^+.
\end{equation}

\dmph{
\begin{proposition}\label{entspan}
Let
$$h_s(f,\ep):=\limsup_{n\to\infty}\frac{\ln r(n,\ep)}{n}.$$
Then we have
$$h(f)=\lim_{\ep\to 0}h_s(f,\ep)=
\sup\bigl\{h_s(f,\ep):\ep>0\bigr\}.$$
\end{proposition}

Given $\ep>0$, by \cite[Lemma 2.1]{Bowen2}, we have
\begin{equation}\label{finent}
r(n,\ep)\le r(\ep)^n\text{ for every }n\in\ZZ^+.
\end{equation}
It follows that 
\begin{equation*}
h_s(f,\ep)\le\ln r(\ep)<\infty.
\end{equation*}
}

\begin{definition}\label{defexp}
For $\ep>0$ and $x\in X$, denote
$$\Gamma_\ep(x):=\Bigl\{y\in X:
d\bigl(f^n(x),f^n(y)\bigr)<\ep\text{ for every }n\in\NN\Bigr\}.$$
Let
$$h^*(f,\ep):=\sup\Bigl\{h\bigl(\Gamma_\ep(x),f\bigr):x\in X\Bigr\}.$$
\begin{enumerate}
\item We say that $(X,f)$ is \emph{expansive} if there is $\ep_0>0$ such
that $\Gamma_{\ep_0}(x)=\{x\}$ for every $x\in X$.
\item We say that $(X,f)$ is \emph{entropy expansive} if there is $\ep_0>0$
such that $h^*(f,\ep_0)=0$.
\item 
We say that $(X,f)$ is \emph{asymptotically entropy expansive} if
$$\lim_{\ep\to 0}h^*(f,\ep)=0.$$
\end{enumerate}
\end{definition}

\begin{proposition}[cf. {\cite[Theorem 2.4]{Bowen2}}]\label{hlocest}
For every subset $K\subset X$ and every $\ep>0$, we have
$$h(K,f)\le h(K,f,\ep)+h^*(f,\ep).$$
\end{proposition}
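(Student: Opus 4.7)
The plan is to fix $\delta\in(0,\ep)$ and establish
\[
h(K,f,\delta)\le h(K,f,\ep)+h^*(f,\ep),
\]
from which the proposition follows by letting $\delta\to 0$. The guiding picture is a block decomposition: split an orbit segment of length $N=kn_0$ into $k$ consecutive blocks of length $n_0$, control the coarse data ``which $(n_0,\ep)$-Bowen ball does each block sit in'' by the scale-$\ep$ complexity of $K$, and control the residual fine structure inside each $(n_0,\ep)$-Bowen ball by the tail entropy $h^*(f,\ep)$.

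The first and main step is a \emph{uniform tail estimate}: for every $\eta>0$ there exists $n_0\in\ZZ^+$ such that for every $y\in X$, the Bowen ball $B_{n_0}(y,\ep)$ admits an $(n_0,\delta/3)$-spanning subset of cardinality at most $e^{n_0(h^*(f,\ep)+\eta)}$. For a fixed $y$ this is essentially a reformulation of the definition, since $\Gamma_\ep(y)=\bigcap_{m\ge 1}B_m(y,\ep)$ is compact with $h(\Gamma_\ep(y),f)\le h^*(f,\ep)$, and $B_{n_0}(y,\ep)$ collapses onto an arbitrarily small neighbourhood of $\Gamma_\ep(y)$ as $n_0\to\infty$. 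Uniformity in $y$ is then upgraded by a compactness argument on $X$ exploiting upper semi-continuity of the set-valued assignment $y\mapsto B_n(y,\ep)$: failure of a uniform $n_0$ would produce, through a limit $y_j\to y^*$, a violation of the one-point estimate at $y^*$.

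Step 2 is the \emph{block decomposition}. Let $N=kn_0$ and let $E\subset K$ be a maximal $(N,\delta)$-separated set, so $|E|=s(K,N,\delta)$. Choose an $(n_0,\ep)$-spanning set $F$ of $\bigcup_{j=0}^{k-1}f^{jn_0}(K)$ whose exponential growth rate in $n_0$ is controlled by $h(K,f,\ep)$. For each $x\in E$ and each $j\in\ZZ_k$, select $y_j(x)\in F$ with $d_{n_0}^f(f^{jn_0}(x),y_j(x))<\ep$; this produces an itinerary $\by(x)\in F^k$ taking at most $|F|^k$ values. Within a level set $E_{\by}:=\{x\in E:\by(x)=\by\}$, Step 1 lets us pick, for each $x$, a profile $(s_0(x),\dots,s_{k-1}(x))$ with $s_j(x)$ in a fixed $(n_0,\delta/3)$-spanning subset of $B_{n_0}(y_j,\ep)$. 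Two points of $E_{\by}$ sharing the same profile would be $(N,2\delta/3)$-close, contradicting $(N,\delta)$-separation. Hence
\begin{equation*}
s(K,N,\delta)\le |F|^k\cdot e^{kn_0(h^*(f,\ep)+\eta)}.
\end{equation*}
Taking logarithms, dividing by $N=kn_0$ and letting $k\to\infty$ gives $h(K,f,\delta)\le\tfrac{\log|F|}{n_0}+h^*(f,\ep)+\eta$. Choosing $n_0$ also large enough that $\tfrac{\log|F|}{n_0}\le h(K,f,\ep)+\eta$, then sending $\eta\to 0$ and $\delta\to 0$, finishes the argument.

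The principal obstacle is the uniform tail estimate in Step 1: the single-point bound $h(\Gamma_\ep(y),f)\le h^*(f,\ep)$ is immediate from the definition, but upgrading it to a uniform finite-time estimate on the moving family $\{B_{n_0}(y,\ep)\}_{y\in X}$ requires a careful semi-continuity argument that leans on the nesting $B_n(y,\ep)\downarrow\Gamma_\ep(y)$ and the compactness of $X$. Step 2 is routine bookkeeping, with the only care being the use of the smaller radius $\delta/3$ in spanning so that the itinerary-plus-profile encoding remains injective on $(N,\delta)$-separated subsets of $K$.
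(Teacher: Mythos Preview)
The paper does not prove this statement; it merely quotes it as \cite[Corollary~2.4]{Bowen2}. Your overall architecture (uniform tail bound plus a product count) is Bowen's, but Step~2 as written fails for non-invariant $K$. You take $F$ to be an $(n_0,\ep)$-spanning set of $\bigcup_{j=0}^{k-1}f^{jn_0}(K)$ and claim $\tfrac{\log|F|}{n_0}\le h(K,f,\ep)+\eta$. This is unjustified: the union grows with $k$, so $|F|$ depends on $k$ and your limit $k\to\infty$ with $\tfrac{\log|F|}{n_0}$ held fixed is incoherent; and even replacing $F$ by an $(n_0,\ep)$-spanning set of all of $X$ so that $|F|$ stabilises, you only get $\tfrac{\log|F|}{n_0}$ close to $h(X,f,\ep)$, not $h(K,f,\ep)$. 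The $(n_0,\ep)$-complexity of the forward images $f^{jn_0}(K)$ is simply not controlled by $h(K,f,\ep)$. The fix is not to block-decompose the coarse part at all: take instead a single $(N,\ep)$-spanning set $S$ of $K$ with $|S|\le s(K,N,\ep)$, assign to each $x\in E$ one label $s(x)\in S$ with $d_N^f(x,s(x))<\ep$, and then apply the tail bound block-by-block inside $B_{n_0}\bigl(f^{jn_0}(s(x)),\ep\bigr)$. This yields $s(K,N,\delta)\le s(K,N,\ep)\cdot e^{N(h^*(f,\ep)+\eta)}$ directly.

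Your Step~1 also needs more than you indicate. The collapse $B_{n_0}(y,\ep)\downarrow\Gamma_\ep(y)$ does not by itself bound $r_{n_0}(\delta/3,B_{n_0}(y,\ep))$ at the \emph{same} time $n_0$, since the set and the spanning time move together. Bowen's route is to establish the uniform estimate $r_n(\delta,\Gamma_\ep(y))\le C\,e^{n(h^*(f,\ep)+\eta)}$ for all $y$ and all $n$: uniformity in $y$ at a single time comes from upper semi-continuity of $y\mapsto\Gamma_\ep(y)$ and compactness of $X$ (as you suggest), and the passage to all $n$ uses the forward invariance $f(\Gamma_\ep(y))\subset\Gamma_\ep(f(y))$, which gives genuine submultiplicativity along the family $\{\Gamma_\ep(y)\}$---a property the finite Bowen balls $\{B_n(y,\ep)\}$ do not share in the same clean form.
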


\dmph{
\begin{corollary}
If $(X,f)$ is asymptotically entropy expansive, then $h(f)<\infty$.
\end{corollary}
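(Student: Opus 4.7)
The plan is to apply Proposition \ref{hlocest} with $K = X$, which directly yields
$$h(f) \;=\; h(X,f) \;\le\; h(X,f,\ep) + h^*(f,\ep) \quad\text{for every } \ep > 0.$$
Since the corollary asks for a finite upper bound, I only need to exhibit a single $\ep > 0$ making both summands on the right finite.

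For the tail entropy term, I would invoke the hypothesis: asymptotic entropy expansiveness means $h^*(f,\ep) \to 0$ as $\ep \to 0$, so in particular there exists some $\ep_0 > 0$ with $h^*(f,\ep_0) < \infty$ (indeed, as small as desired).

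For the local entropy term $h(X,f,\ep_0)$, I would use compactness of $X$. The point is the standard observation that any $(n,\ep_0)$-separated set has at most one point inside each $(n,\ep_0/2)$-ball, so $s(X,n,\ep_0) \le r(n,\ep_0/2)$; and then an inductive covering argument — refining a minimal $(\ep_0/2)$-cover of $X$ by pulling back under $f,\,f^2,\dots,f^{n-1}$ — gives $r(n,\ep_0/2) \le r(\ep_0/2)^n$, where $r(\ep_0/2) < \infty$ by compactness. Taking $(1/n)\log$ and letting $n \to \infty$ yields
$$h(X,f,\ep_0) \;\le\; \log r(\ep_0/2) \;<\; \infty.$$

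Combining the two bounds gives $h(f) \le \log r(\ep_0/2) + h^*(f,\ep_0) < \infty$, which is the desired conclusion. There is really no substantive obstacle here: the work has been done in Proposition \ref{hlocest}, and the only task is to pair it with the trivial compactness bound on $h(X,f,\ep)$ at a fixed scale. If anything, the only thing to be careful about is that one cannot let $\ep \to 0$ in the local term (since $r(\ep/2) \to \infty$), which is exactly why one needs both Bowen's inequality and the entropy-expansiveness hypothesis rather than either alone.
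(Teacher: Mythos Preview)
Your proof is correct and is exactly the intended argument: the paper states the corollary without proof, as an immediate consequence of Proposition~\ref{hlocest} together with the elementary compactness bound $h(X,f,\ep)\le\ln r(\ep/2)<\infty$ at any fixed scale. Your write-up simply fills in the routine details.
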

}

Asymptotic entropy expansiveness 
holds for a  broad class of systems, as indicated by the following
proposition.

\begin{proposition}\label{aeesys}
\begin{enumerate}

\item Every $C^\infty$ diffeomorphism on a compact manifold is asymptotically
entropy expansive 
\cite[Theorem 2.2]{Buzzi}.
\item Every $C^1$ diffeomorphism away from homoclinic tangencies is entropy
expansive 
\cite[Theorem B]{LVY}.
\item If both $(X,f)$ and $(Y,g)$ are asymptotically entropy expansive, then
so is the product $(X\times Y,f\times g)$.
\item Every factor of an asymptotically entropy expansive system is asymptotically entropy expansive.
\end{enumerate}
\end{proposition}


\subsection{Invariant measures and metric entropy}\label{sectionmeasure}

Denote 
by $\cM(X)$ the space of probability measures on $X$.
As $X$ is compact, both $\cM(X)$ and $\cm(X,f)$ are  compact metrizable spaces under the weak-$*$
topology \cite[Theorem 6.5 and Theorem 6.10]{Walters}. 

\begin{proposition}[{\cite[Theorem 6.4]{Walters}}]
There is a metric $D$ on $\cM(X)$ such that $D$ induces the weak-$*$ topology on
$\cM(X)$ and
$$D\left(\sum_{k=1}^na_k\mu_k,\sum_{k=1}^na_k\nu_k\right)
\le \sum_{k=1}^na_kD(\mu_k,\nu_k)$$ 
for any $n\in\ZZ^+$, any $\mu_1,\cdots,\mu_n,\nu_1,\cdots,\nu_n\in\cM(X)$ and any $a_1,\cdots
a_n>0$ satisfying $\sum\limits_{k=1}^na_k=1$.
\end{proposition}

\dmph{
\begin{proposition}[{\cite[Remarks in Section 6.1]{Walters}}]\label{weakconvopen}
A sequence $\mu_n\to\mu$  in $\cm(X,f)$, under the weak-$*$ topology,
if and only if
$$\liminf_{n\to\infty}\mu_n(U)\ge \mu(U)\text{ for every open subset $U\subset X$}.$$
\end{proposition}
}

Denote by $\ext{K}$ the set of extreme points of a convex set $K$.
By \cite[Theorem 6.10]{Walters}, $\cm_e(X,f)=\ext{\cm(X,f)}$
and 
$\cm(X,f)$ is a Choquet simplex, i.e. every $\mu\in\cm(X,f)$ 
is the barycenter of a unique probability measure supported on
$\ext{\cm(X,f)}$.
Moreover, $\cm_e(X,f)$ is a $G_\delta$ subset of $\cm(X,f)$.
If $\cm_e(X,f)$ is dense in $\cm(X,f)$, 
then $\cm_e(X,f)$ is a residual subset of $\cm(X,f)$
and in this case $\cm(X,f)$ is a Poulsen simplex if it is 
not a singleton. 
The structure of the Poulsen simplex has been
studied in \cite{LOS}. Some important facts are listed below.
Readers are referred to \cite{Phelps} for more details on Choquet simplices.

\begin{proposition}[{\cite{LOS}}]\label{poulsen}
\begin{enumerate}
\item A metrizable Choquet simplex $S$ is a Poulsen simplex if 
and only if $S$ is not a singleton and $\ext{S}$ is dense in $S$.
\item The Poulsen simplex is unique up to affine homeomorphisms.
\item Suppose that $S$ is a Poulsen simplex. Then 
$\ext{S}$ is homeomorphic
to the Hilbert space $\ell^2$.
In particular, 
$\ext{S}$ is arcwise connected by simple arcs.
\end{enumerate}
\end{proposition}

\begin{definition}[{\cite[Theorem 1.1]{Ka80}}]\label{metent}
Let $\mu$ be an invariant probability measure for $(X,f)$. Fix $\delta\in(0,1)$. Denote
$$r_\mu(n,\ep,\delta):=\min
\left\{|\cU|:
\begin{gathered}\cU\text{ is a collections of $(n,\ep)$-balls}\\
\text{such that }
\mu\left(\bigcup_{U\in\cU}U\right)>1-\delta
\end{gathered}
\right\}.$$
Then the \emph{metric entropy} of $(X,f)$ with respect to $\mu$ 
can be defined as
$$h_\mu(f):=\lim_{\ep\to0}\limsup_{n\to\infty}\frac{\ln r_\mu(n,\ep,\delta)}{n}
=\lim_{\ep\to0}\liminf_{n\to\infty}\frac{\ln r_\mu(n,\ep,\delta)}{n}.$$
\end{definition}

Throughout this article, 
by \emph{entropy map}
we mean
the map
$\mu\mapsto h_\mu(f)$ 
defined on $\cm(X,f)$.

\begin{proposition}[{\cite[Theorem 8.1]{Walters}}]
For any $\mu,\nu\in\cm(X,f)$ and $\lambda\in[0,1]$, we have
$$h_{\lambda\mu+(1-\lambda)\nu}(f)=\lambda h_{\mu}(f)+(1-\lambda)h_\nu(f).$$
\end{proposition}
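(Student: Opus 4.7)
The plan is to reduce to the partition formulation of metric entropy and exploit standard concavity and subadditivity properties of the function $\phi(t):=-t\log t$. The first step is to establish, for any finite measurable partition $\xi=\{A_1,\ldots,A_k\}$ of $X$ and for $m:=\lambda\mu+(1-\lambda)\nu$, the two-sided estimate
\[
\lambda H_\mu(\xi)+(1-\lambda)H_\nu(\xi)\;\le\; H_m(\xi)\;\le\; \lambda H_\mu(\xi)+(1-\lambda)H_\nu(\xi)+\log 2,
\]
where $H_\rho(\xi):=\sum_{i}\phi(\rho(A_i))$. The lower bound follows from concavity of $\phi$ applied cell by cell. For the upper bound I would use the subadditivity $\phi(a+b)\le \phi(a)+\phi(b)$ together with the scaling identity $\phi(\lambda a)=\lambda\phi(a)+a\phi(\lambda)$; summing over the cells, the scaling leftovers combine to $\phi(\lambda)+\phi(1-\lambda)\le \log 2$.

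The second step is to apply this inequality to the refined partition $\xi^n:=\bigvee_{i=0}^{n-1}f^{-i}\xi$, divide by $n$, and let $n\to\infty$. The additive constant $\log 2$ becomes negligible, producing the per-partition affinity
\[
h_m(f,\xi)=\lambda h_\mu(f,\xi)+(1-\lambda) h_\nu(f,\xi).
\]

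The final step is to pass to a supremum over partitions, and this is where the real care is required, since $h_\mu(f)$, $h_\nu(f)$, and $h_m(f)$ must all be approached along a single sequence of partitions. My approach would be to fix a refining sequence $\{\xi_k\}$ of finite Borel partitions with diameters shrinking to zero and with cell boundaries of $m$-measure zero; such sequences are easy to construct because, for any finite collection of measures, only countably many radii of balls around a given center are bad. By the absolute continuity $\mu,\nu\ll m$ arising from $m=\lambda\mu+(1-\lambda)\nu$, these boundaries are simultaneously null for $\mu$ and $\nu$, so $\{\xi_k\}$ is generating modulo nullsets for all three measures and $h_\rho(f,\xi_k)\nearrow h_\rho(f)$ simultaneously for $\rho\in\{\mu,\nu,m\}$. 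Letting $k\to\infty$ in the per-partition identity yields the claim. The translation between this partition-based entropy and the covering formulation in Definition \ref{metent} is Katok's theorem for ergodic measures and extends to general invariant measures through the ergodic decomposition. The main obstacle, and the only genuinely non-routine point, is coordinating the suprema in the third step; the algebra in the first two steps is purely elementary.
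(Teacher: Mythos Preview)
The paper does not actually prove this proposition; it simply records the reference to \cite[Theorem~8.1]{Walters}. Your proposal reproduces essentially the argument given there (the $\log 2$ sandwich on $H_\rho(\xi)$, passage to $\xi^n$, then a common refining generating sequence), and the outline is correct.
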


\begin{proposition}[Variational Principle]\label{propvp}
For any system $(X,f)$, we have
$$h(f)=\sup\bigl\{h_\mu(f):\mu\in\cM(X,f)\bigr\}
=\sup\bigl\{h_\mu(f):\mu\in\cM_e(X,f)\bigr\}.$$
\end{proposition}


\begin{proposition}[{\cite[Corollary 4.1]{Misiu}}]\label{uppersemicts}
If $(X,f)$ is asymptotically entropy expansive, then
the entropy map $\mu\mapsto h_\mu(f)$ is upper semi-continuous with
respect to the weak-$*$ topology on $\cM(X,f)$.
As a corollary, there is $\mu_{M}\in\cm_e(X,f)$, which is called a
measure of maximal entropy, such that $h_{\mu_M}(f)=h(f)$.
\end{proposition}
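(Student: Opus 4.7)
The plan is to establish upper semi-continuity by combining Bowen's inequality (the measure-theoretic refinement of Proposition \ref{hlocest}) with continuity properties of finite partitions, and then deduce the corollary from compactness of $\cm(X,f)$ together with the variational principle. Fix $\mu\in\cm(X,f)$ and $\eta>0$; the goal is a weak-$*$ neighborhood $U$ of $\mu$ on which $h_\nu(f)<h_\mu(f)+\eta$. By asymptotic entropy expansiveness, choose $\ep>0$ with $h^*(f,\ep)<\eta/3$. Next, construct a finite Borel partition $\xi=\{A_1,\ldots,A_k\}$ of $X$ with $\mathrm{diam}(A_i)<\ep$ and $\mu(\partial A_i)=0$ for every $i$; this is a standard construction, obtained by covering $X$ with small open balls and perturbing their radii to avoid the at most countably many radii for which the boundary sphere is charged by $\mu$. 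The key ingredient is Bowen's inequality
$$h_\nu(f)\le h_\nu(f,\xi)+h^*(f,\ep)\qquad\text{for every }\nu\in\cm(X,f),$$
where $h_\nu(f,\xi)=\lim_{n\to\infty}\tfrac{1}{n}H_\nu(\xi_0^{n-1})$ with $\xi_0^{n-1}:=\bigvee_{j=0}^{n-1}f^{-j}\xi$; this refines Proposition \ref{hlocest} to the measure-theoretic setting and applies because the atoms of $\xi$ have diameter less than $\ep$.

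Pick $n$ large enough that $\tfrac{1}{n}H_\mu(\xi_0^{n-1})<h_\mu(f,\xi)+\eta/3$. Each atom of $\xi_0^{n-1}$ is an intersection of preimages $f^{-j}A_i$, so its topological boundary lies in $\bigcup_{j=0}^{n-1}\bigcup_i f^{-j}(\partial A_i)$, which is $\mu$-null. Consequently, applying Lemma \ref{weakconvopen} to the interior of each atom and to its complement yields $\nu(A)\to\mu(A)$ as $\nu\to\mu$ for every atom $A$ of $\xi_0^{n-1}$. Continuity of $t\mapsto -t\log t$ then forces $H_\nu(\xi_0^{n-1})\to H_\mu(\xi_0^{n-1})$, so there is a weak-$*$ neighborhood $U$ of $\mu$ on which
$$h_\nu(f,\xi)\le \tfrac{1}{n}H_\nu(\xi_0^{n-1})<h_\mu(f,\xi)+\tfrac{2\eta}{3}\le h_\mu(f)+\tfrac{2\eta}{3}.$$
Combining with Bowen's inequality gives $h_\nu(f)<h_\mu(f)+\eta$ on $U$, which is exactly the desired upper semi-continuity.

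For the corollary, the variational principle furnishes $\mu_n\in\cm(X,f)$ with $h_{\mu_n}(f)\to h(f)$; by compactness of $\cm(X,f)$ extract a weak-$*$ limit $\mu$, and upper semi-continuity yields $h_\mu(f)\ge\limsup_n h_{\mu_n}(f)=h(f)$. To upgrade to an ergodic measure, use the ergodic decomposition $\mu=\int\nu\,d\tau(\nu)$ with $\tau$ supported on $\cm_e(X,f)$; since $h_\mu(f)=\int h_\nu(f)\,d\tau(\nu)$ and $h_\nu(f)\le h(f)$ everywhere, the integrand equals $h(f)$ for $\tau$-a.e.\ $\nu$, producing the required $\mu_M\in\cm_e(X,f)$. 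The main obstacle is verifying Bowen's inequality in this measure-theoretic form and, in particular, reconciling the Katok-style definition of $h_\mu(f)$ from Definition \ref{metent} with the partition-based formulation used throughout the argument; the two are known to agree, but this identification is the non-trivial analytical input underpinning the whole scheme.
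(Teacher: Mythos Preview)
The paper does not supply its own proof of this proposition: it is quoted directly as \cite[Corollary 4.1]{Misiu} and used as a black box. Your argument is the standard one and is essentially Misiurewicz's original proof (via Bowen's inequality for partitions of small diameter combined with continuity of $H_\nu$ on $\mu$-continuity partitions), so there is nothing to compare against in the paper and your write-up is correct as it stands.

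One minor remark: your final paragraph flags the equivalence between the Katok definition of $h_\mu(f)$ in Definition~\ref{metent} and the Kolmogorov--Sinai partition definition as ``the main obstacle''. This equivalence is exactly the content of \cite[Theorem 1.1]{Ka80}, already cited at Definition~\ref{metent}, so within the conventions of this paper it may be invoked without further justification rather than presented as a residual gap.
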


\subsection{The specification-like properties}
\begin{definition}\label{gapshadow}
Let $\sC=\{x_k\}
_{k\in\ZZ^+}$ be a sequence in $X$.
Let $\sss=\{m_k\}
_{k\in\ZZ^+}$
 and $\sg=\{t_k\}_{k\in\ZZ^+}$
be sequences of positive integers.
The pair $(\sC,\sss)$ shall be called an
\emph{orbit sequence} while $\sg$ shall be called a \emph{gap}.
For $\ep>0$ and $z\in X$, we say that $(\sC,\sss, \sg)$ is \emph{$\ep$-traced} by $z$
if 
for each $k\in\ZZ^+$,
\begin{equation}\label{eqtracing}
d(f^{s_{k}+j}(z), f^j(x_k))\le\ep\text{ for each }j=0,1,\cdots, m_k-1,
\end{equation}
where
$$s_1=s_1(\sss,\sg):=0\text{ and }s_k=s_k(\sss,\sg):=\sum_{i=1}^{k-1}(m_i+t_i-1)\text{ for }k\ge 2.$$
\end{definition}

\begin{definition}\label{defspeci}
We say that $(X,f)$ has the \emph{specification property}
 if for every $\ep>0$ there
is $M=M(\ep)>0$ such that for any orbit sequence 
$(\sC,\sss)$ and any gap $\sg$ satisfying $\min\sg\ge M$,
there is $z\in X$ that $\ep$-traces $(\sC,\sss,\sg)$.
\end{definition}

\begin{definition}\label{defgo}
We say that 
$(X,f)$ 
has the \emph{gluing orbit property}
 if for every $\ep>0$ there
is $M=M(\ep)>0$ such that for any orbit sequence 
$(\sC,\sss)$, there is a gap $\sg$ satisfying $\max\sg\le M$ and $z\in X$
such that 
$(\sC, \sss, \sg)$  can be $\ep$-traced by $z$.
\end{definition}

Let $\{a_n\}_{n=1}^\infty, \{b_n\}_{n=1}^\infty$ be two sequences of integers.
We write
$$\{a_n\}_{n=1}^\infty\le\{b_n\}_{n=1}^\infty\text{ if }a_n\le b_n\text{
for each }n\in\ZZ^+.$$
For a sequence $\sss=\{a_n\}_{n=1}^\infty$ of positive integers and a function $L:\ZZ^+\to\ZZ^+$,
we write
$$L(\sss):=\bigl\{L(a_n)\bigr\}_{n=1}^\infty.$$
We say that the function $L:\ZZ^+\to\ZZ^+$ is \emph{tempered}
if $L$ is nondecreasing and
$$\lim_{n\to\infty}\frac{L(n)}{n}=0.$$
Denote by $\sigma$ the shift operator on sequences, i.e.
$$\sigma\bigl(\{a_n\}_{n=1}^\infty\bigr)=\{a_{n+1}\}_{n=1}^\infty.$$

\begin{definition}\label{defawsp}
We say that $(X,f)$ 
has the
\emph{almost weak specification property} (as in \cite{GSW,QS}), 
or the \emph{tempered specification property}, 
 if for every $\ep>0$ there
is a tempered function $L_\ep:\ZZ^+\to\ZZ^+$ such that for any orbit sequence 
$(\sC,\sss)$ and any gap $\sg$ satisfying $\sg\ge L_\ep(\sigma(\sss))$,
there is $z\in X$ that $\ep$-traces $(\sC,\sss,\sg)$.
\end{definition}

\begin{definition}\label{defnugo}
We say that $(X,f)$ 
has the \emph{tempered gluing orbit property}
 if for every $\ep>0$ there
is a tempered function $L_\ep:\ZZ^+\to\ZZ^+$ such that for any orbit sequence 
$(\sC,\sss)$, there is a gap $\sg$ satisfying $\sg\le L_\ep(\sigma(\sss))$ and $z\in X$
such that 
$(\sC, \sss, \sg)$  can be $\ep$-traced by $z$.
\end{definition}


Definition \ref{defspeci}--\ref{defnugo} are equivalent to their analogs
respectively, if we require that the tracing property
\eqref{eqtracing} holds for all finite orbit sequences.
A proof of the equivalence for the gluing orbit property can be found in \cite[Lemma 2.10]{Sun19}.
The proof for the other cases is analogous. 
The properties are called \emph{periodic} if for any finite orbit sequence we
require that
the tracing point $z$ is a periodic point with the specified period
(cf. \cite{KLO, TWW}).


The notion of the gluing orbit property first appeared in \cite{TS} by Tian
and W. Sun in an equivalent
form with the name ``transitive specification''. It has recently drawn much
attention since the work \cite{BTV} of Bomfim, Torres and Varandas. It is also
shown in \cite{TS} that the gluing orbit property holds if the system satisfies the
\emph{shadowing property} (also known as the \emph{pseudo-orbit tracing property})
and topological transitivity. Hence the result of \cite{LO} is obtained under a
stronger assumption than ours.

The notion of tempered specification property was first introduced,
without a name, in 
Marcus' remarkable work
\cite{Marc}
that proved this property for all ergodic toral automorphisms. 
The property is called \emph{almost weak specification} in some references such as
\cite{GSW, QS} and suggested to be called \emph{weak specification} in \cite{KLO}. The author suggests
the name \emph{tempered specification} to avoid 
possible ambiguity with other
specification-like properties. 
Then the tempered the gluing orbit is just a natural generalization
of the gluing orbit property and the tempered specification property.

The relations between various specification-like properties are illustrated
in Figure \ref{fig_summsb}.
Readers are referred to \cite{KLO} for a survey on specification-like properties.

\begin{figure}[h]
	
	\centering
	\begin{tikzpicture}
	[node distance=.8cm,
	start chain=going below,]

	\node (sp) [punktchain]  {Specification};

	\begin{scope}[start branch=venstre,
	every join/.style={->, ultra thick, shorten <=1pt}, ]
	
	\node[punktchain, on chain=going below, join=by {->}]
	(tsp) {Tempered Specification};
	
	\node[punktchain, on chain=going right, join=by {->}] (tgo) {Tempered Gluing Orbit};
	
	
	\node[punktchain, on chain=going below right, join=by {->}, ] (app) {Approximate Product};

	

	\end{scope}
	
	\begin{scope}[start branch=venstre,
	every join/.style={->, ultra thick, shorten <=1pt}, ]
	
	\node[punktchain, on chain=going right, join=by {->}] (go)  {Gluing Orbit};
	
	\node[punktchain, on chain=going above right, join=by {<-}] (st)      {Shadowing $+$ Transitivity};


	
	\end{scope}


	\draw[->, ultra thick,] (go.south) -- (tgo.north);

	

	
	
	
	\end{tikzpicture}
	\caption{Relations between various specification-like properties}\label{fig_summsb}
\end{figure}

\subsection{The approximate product property}
\begin{definition}\label{gapshadow}

Let $\sC=\{x_k\}
_{k\in\ZZ^+}$ be a sequence in $X$
 and $\sg=\{t_k\}_{k\in\ZZ^+}$
be an increasing sequence of nonnegative integers.
For $n\in\ZZ^+$, $\delta_1,\delta_2,\ep>0$
and $z\in X$, we say that $\sC$ is 
\emph{$(n,\delta_1,\delta_2,\sg,\ep)$-traced} by $z$
if $\sg$ is \emph{$(n,\delta_1)$-spaced}, i.e.
$$t_1=0\text{ and }n\le t_{k+1}-t_k<n(1+\delta_1)\text{ for each }k\in\ZZ^+,$$
 and
the following \emph{tracing property} holds:
\begin{equation}\label{ndeltrace}
\left|\left\{j\in\ZZ_n: d(f^{t_{k}+j}(z), f^j(x_k))>\ep\right\}\right|<\delta_2
n\text{ for each $k\in\ZZ^+$}.
\end{equation}
\end{definition}

\begin{definition}\label{defapp}
We say that $(X,f)$ 
has the \emph{approximate product property},
if for every $\delta_1, \delta_2, \ep>0$, 
there is $M=M(\delta_1,\delta_2,\ep)>0$
such that for every $n> M$ and every sequence $\sC$ in $X$, there are an $(n,\delta_1)$-spaced
sequence $\sg$ and $z\in X$ such that $\sC$ is \emph{$(n, \delta_1, \delta_2,
\sg,
\ep)$-traced} by $z$.
\end{definition}

The Approximate product property is almost the weakest specification-like property.
It is weaker than tempered gluing
orbit property and all other specification-like properties
discussed in \cite{KLO}, including almost specification property,
relative specification property, almost product property, etc.
It is independent
with the decomposition introduced by Climenhaga
and Thompson \cite{CT}. 


\begin{proposition}
Suppose that $(X,f)$ has the tempered gluing orbit property. 
Then $(X,f)$ has the approximate product property.
\end{proposition}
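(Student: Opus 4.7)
The plan is to derive approximate product property directly from tempered gluing orbit by applying the latter to orbit sequences with constant segment length $n$, then translating the resulting gap sizes into absolute starting positions. Given $\delta_1, \delta_2, \ep > 0$, let $L_\ep$ be the tempered function from Definition~\ref{defnugo}. Since $L_\ep(n)/n \to 0$, I can choose $M > 0$ so that $L_\ep(n) < n\delta_1$ for all $n > M$; this $M$ will serve as the threshold required by Definition~\ref{defapp} (it depends only on $\delta_1$ and $\ep$, not on $\delta_2$, which is permitted).

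Given such an $n$ and any sequence $\sC = \{x_k\}$ in $X$, I form the orbit sequence $(\sC, \sss)$ with $\sss = \{n, n, n, \ldots\}$ and apply tempered gluing orbit. This yields a gap $\sg' = \{t'_k\}$ with $t'_k \le L_\ep(n)$ for every $k$, together with $z \in X$ that $\ep$-traces $(\sC, \sss, \sg')$ in the gluing-orbit sense. I then set $s_1 := 0$, $s_{k+1} := s_k + n + t'_k - 1$, and declare $\sg := \{s_k\}$ to be the candidate gap of Definition~\ref{defapp}. The $(n,\delta_1)$-spacing condition follows from $s_{k+1} - s_k = n + t'_k - 1$, bounded below by $n$ (since $t'_k \ge 1$) and strictly above by $n(1+\delta_1)$ (since $t'_k \le L_\ep(n) < n\delta_1$ gives $n + t'_k - 1 < n + n\delta_1$). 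The tracing condition is even cheaper: gluing-orbit tracing yields $d(f^{s_k+j}(z), f^j(x_k)) \le \ep$ for every $k$ and every $j \in \ZZ_n$, so the bad set in \eqref{ndeltrace} is empty, hence trivially of cardinality less than $\delta_2 n$.

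There is no substantive obstacle: the argument is a notational translation whose only real content is recognizing that the symbol $\sg$ encodes gap sizes in the gluing-orbit setup but absolute starting positions in the approximate-product setup, and that temperedness $L_\ep(n)/n \to 0$ is exactly the hypothesis needed to transform a uniform bound on gap sizes into the $(n,\delta_1)$-spacing bound. The parameter $\delta_2$ plays no role, since exact $\ep$-tracing on every window trivially implies partial $\ep$-tracing regardless of the tolerance.
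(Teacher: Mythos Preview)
Your proof is correct and takes essentially the same approach as the paper's own proof: both apply the tempered gluing orbit property to the constant orbit sequence $\sss=\{n\}^{\ZZ^+}$, convert the gap sizes $t'_k$ into starting positions $s_k=\sum_{j<k}(n+t'_j-1)$, and observe that temperedness $L_\ep(n)/n\to 0$ gives the $(n,\delta_1)$-spacing while exact $\ep$-tracing makes the mistake set in \eqref{ndeltrace} empty. Your write-up is in fact cleaner than the paper's, which contains a couple of typographical slips in the computation of $s_{k+1}-s_k$.
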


\begin{proof}
Suppose that we are given $\delta_1,\delta_2,\ep>0$ and $(X,f)$ has the tempered gluing orbit property.
There is a tempered function $L_\ep:\ZZ^+\to\ZZ^+$ such that for any orbit sequence 
$(\sC,\sss)$, there is a gap $\sg$ satisfying $\sg\le L_\ep(\sigma(\sss))$ and $z\in X$
such that 
$(\sC, \sss, \sg)$  can be $\ep$-traced by $z$. Then there is $M$ such that
$$\frac{L_\ep(n)}{n}<\delta_1\text{ for every }n> M.$$
For every $n> M$ and every sequence $\sC=\{x_k\}
_{k\in\ZZ^+}$ in $X$, assume that $(\sC,\{n\}^{\ZZ^+},\{t_k\}_{k=1}^\infty)$
is $\ep$-traced by $z$ and $t_k\le L_\ep(n)$ for each $k$. Denote
$$s_k:=\sum_{j=1}^{k-1}(n+t_k-1).$$
Then
$$s_1=0, n\le s_{k+1}-s_k=t_k-1<n+L_\ep(n)<(1+\delta_1) n\text{ for each $k$},$$
and
$$\left|\left\{j\in\ZZ_n: d(f^{s_{k}+j}(z), f^j(x_k))>\ep\right\}\right|=0<\delta_2
n\text{ for each $k$}.$$
Hence $\sC$ is $(n, \delta_1, \delta_2,
\{s_k\}_{k=1}^\infty, \ep)$-traced by $z$.
This implies that $(X,f)$ has the approximate product property.
\end{proof}


The following facts allow us to find more examples of systems with 
the approximate
product property.

\begin{proposition}[{\cite[Proposition 2.2]{PfSu}}]\label{appfactor}
	Every factor of a system with the approximate product property has approximate
	product property.
\end{proposition}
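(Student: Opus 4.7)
The plan is to use uniform continuity of the factor map to transfer tracing in $X$ to tracing in the factor. Suppose $(X,f)$ has approximate product property and $\pi:X\to Y$ is a factor map, i.e.\ $\pi$ is a continuous surjection with $\pi\circ f=g\circ\pi$. I want to show $(Y,g)$ satisfies Definition \ref{defapp}.

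First I would fix the target parameters $\delta_1,\delta_2,\ep>0$ for $(Y,g)$. Since $X$ is compact, $\pi$ is uniformly continuous, so I can choose $\ep'>0$ such that $d_X(x,x')\le\ep'$ implies $d_Y(\pi(x),\pi(x'))\le\ep$. Set $M(\delta_1,\delta_2,\ep):=M_X(\delta_1,\delta_2,\ep')$, where $M_X$ is the constant granted by approximate product property for $(X,f)$.

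Next, given any $n>M(\delta_1,\delta_2,\ep)$ and any sequence $\sC=\{y_k\}_{k\in\ZZ^+}$ in $Y$, I would lift it to a sequence $\{x_k\}_{k\in\ZZ^+}$ in $X$ with $\pi(x_k)=y_k$, using surjectivity of $\pi$. Applying approximate product property of $(X,f)$ to $\{x_k\}$ with parameters $(\delta_1,\delta_2,\ep')$, I obtain an $(n,\delta_1)$-spaced sequence $\sg=\{t_k\}$ and $z\in X$ such that
\[
\left|\left\{j\in\ZZ_n:d_X(f^{t_k+j}(z),f^j(x_k))>\ep'\right\}\right|<\delta_2 n\quad\text{for each }k.
\]
Let $w:=\pi(z)\in Y$. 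By the choice of $\ep'$ and the intertwining $\pi\circ f^m=g^m\circ\pi$, any index $j$ satisfying $d_Y(g^{t_k+j}(w),g^j(y_k))>\ep$ must also satisfy $d_X(f^{t_k+j}(z),f^j(x_k))>\ep'$. Hence the exceptional set in $Y$ is contained in that in $X$, and so has cardinality less than $\delta_2 n$. Thus $\sC$ is $(n,\delta_1,\delta_2,\sg,\ep)$-traced by $w$ under $g$, verifying Definition \ref{defapp} for $(Y,g)$.

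There is no real obstacle here; the only mild point to be careful about is that the definition uses strict inequality $>\ep$ versus $\le\ep'$, so I would pick $\ep'$ so that $d_X(x,x')\le\ep'$ yields $d_Y(\pi(x),\pi(x'))\le\ep$ (and then the contrapositive matches the set inclusion above). Everything else is a direct translation via the factor map and compactness.
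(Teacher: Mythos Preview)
Your argument is correct and is exactly the standard proof of this fact. The paper itself does not give a proof; it simply cites \cite[Proposition 2.2]{PfSu}, and the argument there is the same uniform-continuity-and-lift reasoning you wrote down.
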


\begin{proposition}\label{prodapp}
Suppose that $(X,f)$ has the approximate product property and $(Y,g)$ has the tempered specification property.
Then the product $(X\times Y, f\times g)$ has the approximate product property.
\end{proposition}

\begin{proof}
Suppose that we are given $\delta_1,\delta_2,\ep>0$. 
We can find $\delta_1',\delta_2'>0$ such that
$$
(1+\delta_1')^2
<1+\delta_1
\text{ and }\delta_2'(1+{\delta_1'})<\delta_2.$$
Let $M=M({\delta_1'},\delta_2',\ep)$ for $(X,f)$ as in Definition
\ref{defapp} and
 $L_{\ep}:\ZZ^+\to\ZZ^+$ be the tempered function
for $(Y,g)$ as in Definition \ref{defawsp}.
There is $N$ such that $L_{\ep}(n)<{\delta_1'}n$ for every $n>N$. Then for every
$n>\max\{M,N\}$ and every $\sC=\{(x_k,y_k)\}_{k=1}^\infty\in X\times Y$,
there are an $\Bigl(\bigl\lfloor(1+{\delta_1'})n\bigr\rfloor,{\delta_1'}\Bigr)$-spaced sequence $\sg=\{t_k\}_{k=1}^\infty$ and $z_X\in X$ such that
$\sC_X:=\{x_k\}_{k=1}^\infty$ is $\Bigl(\bigl\lfloor (1+{\delta_1'})n\bigr\rfloor,{\delta_1'},\delta_2',\sg,{\ep}\Bigr)$-traced by $z_X$.
Then for each $k$, we have
$$n\le\lfloor (1+{\delta_1'})n\rfloor\le t_{k+1}-t_k<(1+\delta_1')\lfloor (1+{\delta_1'})n\rfloor<(1+\delta_1)n$$
and
$$\left|\left\{j\in\ZZ_n: d(f^{t_{k}+j}(z_X), f^j(x_k))>\ep\right\}\right|<\delta_2'\lfloor (1+{\delta_1'})n\rfloor<\delta_2n.$$
This implies that $\sC_X$ is also $(n,\delta_1,\delta_2,\sg,{\ep})$-traced
by $z_X$.

Let
$$\sg^*:=\{t_k'\}_{k=1}^\infty \text{ such that }t_k':=t_{k+1}-t_k-(n-1)\text{
for each }k.$$
Then $t_k'\ge \delta_1'n> L_{\ep}(n)$ for each $k$ and hence
$\sg^*\ge L_{\ep}(\{n\}^{\ZZ^+})$. As $(Y,g)$ has tempered specification property, 
there is $z_Y\in Y$ that ${\ep}$-traces $(\{y_k\}_{k=1}^\infty,\{n\}^{\ZZ^+},\sg^*)$.
Then $\sC$ is $(n,\delta_1,\delta_2,\sg,\ep)$-traced by $(z_X,z_Y)$.

\end{proof}

In \cite{PfSu}, Pfister and Sullivan has shown that the approximate product property
implies entropy denseness. 

\begin{definition}[{\cite[Definition 2.7]{PfSu}}]
Let $\mu\in\cm(X,f)$. We say that $\mu$ is \emph{entropy-approachable}
by ergodic measures if for any $\eta>0$ and any $h<h_\mu(f)$, there is
$\nu\in\cm_e(X,f)$ such that
$$D(\mu,\nu)<\eta\text{ and }h_\nu(f)>h.$$
We say that the system $(X,f)$ is \emph{entropy-dense}
if every $\mu\in\cm(X,f)$ is entropy-approachable
by ergodic measures.
\end{definition}

\begin{proposition}[{\cite[Theorem 2.1]{PfSu}}]\label{entropydense}
Suppose that $(X,f)$ has the approximate product property. Then 
$(X,f)$ is \emph{entropy-dense}.
\end{proposition}

We remark that 
almost entropy-approximability (see Definition \ref{defea})
implies entropy-approachability: by the Variational Principle,
the compact $f$-invariant set
$\Lambda$ supports an ergodic measure $\nu$ whose metric entropy
can be arbitrarily close to
$h(\Lambda,f)$, 
hence we have $h_\nu(f)>h$.
The advantage of entropy-approximability is that
we have an upper estimate for the entropy of $h_{\nu}(f)$, from 
which we can derive entropy-genericity, 
a more delicate structure
than entropy-denseness.

Entropy-denseness implies that $\cm_e(X,f)$
is dense in $\cm(X,f)$. By Proposition \ref{poulsen}, $\cm(X,f)$ is either a singleton or a Poulsen
simplex.

\begin{corollary}\label{apppoul}
Suppose that $(X,f)$ has the approximate product property. Then 
$\cm_e(X,f)$ is a residual subset of $\cm(X,f)$
and
$\cm_e(X,f)$ is arcwise connected by simple arcs.
\end{corollary}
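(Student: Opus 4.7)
The plan is to derive both statements directly from Proposition \ref{entropydense} together with the general Choquet-theoretic facts recalled in Section \ref{sectionmeasure}, so the work reduces to two short observations and an appeal to Proposition \ref{poulsensimp}.

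First I would apply Proposition \ref{entropydense} to observe that for every $\mu\in\cm(X,f)$ and every $\eta>0$ there exists $\nu\in\cm_e(X,f)$ with $D(\mu,\nu)<\eta$. This is exactly the statement that $\cm_e(X,f)$ is dense in $\cm(X,f)$; the entropy control afforded by Proposition \ref{entropydense} is not needed for the present corollary, only the approximation in the weak-$*$ metric.

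For part (1), I would recall from Section \ref{sectionmeasure} that $\cm(X,f)$ is a compact metrizable Choquet simplex, and hence its set of extreme points $\cm_e(X,f)$ is a $G_\delta$ subset of $\cm(X,f)$. Combined with the density established in the previous step, $\cm_e(X,f)$ is a dense $G_\delta$ subset of the complete metric space $\cm(X,f)$, hence residual by the Baire category theorem.

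For part (2), I would split into two cases. If $\cm(X,f)$ is a singleton, then $\cm_e(X,f)=\cm(X,f)$ is a one-point space and arcwise connectedness by simple arcs holds vacuously. Otherwise, the density of $\cm_e(X,f)$ in the non-trivial Choquet simplex $\cm(X,f)$ means, by the characterization recalled in Section \ref{sectionmeasure}, that $\cm(X,f)$ is a Poulsen simplex; then Proposition \ref{poulsensimp}(3) yields arcwise connectedness of $\cm_e(X,f)$ by simple arcs. There is no real obstacle here, as the whole argument is an assembly of results already cited in the preceding sections; the main content is the density of ergodic measures, which is precisely what Pfister and Sullivan's Proposition \ref{entropydense} provides.
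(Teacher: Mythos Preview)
Your proposal is correct and follows essentially the same route as the paper: the text preceding the corollary notes that Proposition \ref{entropydense} yields density of $\cm_e(X,f)$ in $\cm(X,f)$, and then Section \ref{sectionmeasure} (the $G_\delta$ property of extreme points and Proposition \ref{poulsensimp}) gives both conclusions, with the singleton case handled trivially.
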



Example \ref{noappnothm} provides an expansive system
that is entropy-dense
($\cm(X,f)$ is a Poulsen simplex) but not entropy-generic.

\section{Empirical Measures}\label{secem}

In this section we discuss some facts on empirical measures to prepare ourselves
for the proof of the main results. Our proof
mainly follows \cite[Section 5.3]{CLT}.

For $x\in X$ and $n\in\NN$, we define the \emph{empirical measure} $\cE(x,n)$ such that
$$\int\phi d\cE(x,n):=\frac1n\sum_{k=0}^{n-1}\phi(f^k(x))\text{ for every
}\phi\in C(X).$$
Given a set $U\subset\cm(X,f)$, let
$$E(U,n):=\{x\in X: \cE(x,n)\in U\}.$$
 Let $\mu\in\cm(X,f)$ and  $\eta>0$. Denote 
 $$\cB_\eta=\cB_\eta(\mu):=\overline{B(\mu,\eta)}
 =\left\{\nu\in\cm(X,f): D(\mu,\nu)\le\eta\right\}.$$ 
 For $N\in\ZZ^+$,
denote
\begin{align}
Z_{N,\eta}=Z_{N,\eta}(\mu):={}&\{x\in X:f^k(x)\in E(\cB_\eta,N)\text{ for every }k\in\NN\} \notag
\\\label{utrap}={}&\{x\in X:\cE(f^k(x),N)\in\cB_\eta\text{ for every }k\in\NN\}.
\end{align}
Then  $f(Z_{N,\eta})\subset Z_{N,\eta}$.
By \cite[Section 6.1]{Walters}, the map $x\mapsto \cE(x,N)$ is continuous. It is uniformly
continuous as $X$ is compact.
It follows that the set $Z_{N,\eta}$ is also compact.
For $\ep>0$, denote
$$\var(\ep):=\max\Bigl\{D\big(\cE(x,1),\cE(y,1)\bigr):
d(x,y)\le\ep, x,y\in X\Bigr\}.$$
Uniform continuity of the map $x\mapsto \cE(x,1)$ implies that
\begin{equation}\label{varep}
\lim_{\ep\to 0}\var(\ep)=0.
\end{equation}
We shall also denote by $D^*$ the diameter of $\cm(X)$, i.e.
$$D^*:=\max\bigl\{D(\mu,\nu): \mu,\nu\in\cm(X)\bigr\}.$$


\begin{lemma}\label{measclose}
For any $N\in\ZZ^+$ and any $\nu\in\cm(Z_{N,\eta}, f)$, we have $D(\nu,\mu)\le\eta$.
\end{lemma}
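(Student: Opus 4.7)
The plan is to realize $\nu$ as a barycentric average of the empirical measures $\cE(x,N)$ and then exploit the convexity of the closed ball $\cB_\delta$.

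The first step is to use the $f$-invariance of $\nu$ to write, for every $\phi\in C(X)$,
$$\int\phi\,d\nu=\frac{1}{N}\sum_{k=0}^{N-1}\int\phi\circ f^k\,d\nu=\int\cE(x,N)(\phi)\,d\nu(x),$$
which identifies $\nu$ as the barycenter $\int\cE(x,N)\,d\nu(x)$ of the pushforward of $\nu$ under the uniformly continuous map $x\mapsto\cE(x,N)$.

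Next, I would specialize the subadditivity inequality quoted from \cite[Theorem 6.4]{Walters} by taking $\nu_1=\cdots=\nu_n=\mu$; this shows that $\lambda\mapsto D(\lambda,\mu)$ is a convex function on $\cm(X)$, so the closed ball $\cB_\delta$ is both closed and convex. Since $\supp\nu\subset Z_{N,\delta}$, the definition \eqref{utrap} gives $\cE(x,N)\in\cB_\delta$ for every $x\in\supp\nu$, and hence for $\nu$-a.e.\ $x$.

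The concluding step is to upgrade the finite-sum convexity to its integral form,
$$D(\nu,\mu)=D\!\left(\int\cE(x,N)\,d\nu(x),\mu\right)\le\int D(\cE(x,N),\mu)\,d\nu(x)\le\delta.$$
I would obtain this by weak-$*$ approximating $\nu$ by atomic measures supported on finitely many points of $Z_{N,\delta}$, applying the finite-sum inequality from \cite[Theorem 6.4]{Walters} to each approximant, and passing to the limit using continuity of $x\mapsto\cE(x,N)$ and continuity of $D$. The only real technical point is this last approximation step, but it is routine given the compactness of $Z_{N,\delta}$ and the weak-$*$ continuity of the barycenter map on $\cM(X)$.
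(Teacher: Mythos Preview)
Your argument is correct, and it takes a genuinely different route from the paper. The paper first reduces to the case where $\nu$ is ergodic, picks a generic point $x\in Z_{N,\delta}$, writes the empirical measure $\cE(x,n)$ for large $n$ as a convex combination of the blocks $\cE(f^{jN}(x),N)\in\cB_\delta$ (plus a remainder of size $O(N/n)$), applies the finite convexity inequality for $D$, and then lets $n\to\infty$; the general case follows by ergodic decomposition. Your approach instead uses invariance once to obtain the exact barycentric identity $\nu=\int\cE(x,N)\,d\nu(x)$, and then concludes by a single application of the (integral form of) convexity of $D(\cdot,\mu)$, passing through atomic approximations supported on $Z_{N,\delta}$. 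Your argument is cleaner in that it avoids both the generic-point machinery and the ergodic decomposition, and it yields the conclusion for all invariant $\nu$ at once with no error term to kill. The paper's approach, on the other hand, stays entirely within finite convex combinations and is perhaps closer to the spirit of the surrounding orbit-tracing estimates. One small remark: your approximation step could be bypassed entirely by noting that the pushforward of $\nu$ under $x\mapsto\cE(x,N)$ is a probability measure on $\cM(X)$ supported in the closed convex set $\cB_\delta$, so its barycenter $\nu$ lies in $\cB_\delta$ directly by the standard barycenter lemma for compact convex sets.
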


\begin{proof}
Assume that $\nu\in\cm(Z_{N,\eta}, f)$ is ergodic. There is a generic point
$x\in Z_{N,\eta}$ such that $\cE(x,n)$ converges to  $\nu$ as $n\to\infty$.

Write $n=kN+l$ such that $k\in\NN$ and $0\le l<N$. Note that
\begin{align*}
\cE(x,n)&=\sum_{j=0}^{k-1}\left(\frac Nn\cE\left(f^{jN}(x),N\right)\right)
+\frac ln\cE\left(f^{kN}(x),l\right).
\end{align*}
For each $j\in\NN$, as $x\in Z_{N,\eta}$, by \eqref{utrap}, 
we have $\cE(f^{jN}(x),N)\in\cB_\eta$ and hence
$$D\left(\cE\left(f^{jN}(x),N\right),\mu\right)\le \eta.$$
It follows that
\begin{align*}
D\left(\cE(x,n),\mu\right)
&\le\sum_{j=0}^{k-1}\frac NnD\left(\cE\left(f^{jN}(x),N\right),\mu\right)
+\frac ln D\left(\cE\left(f^{kN}(x),l\right),\mu\right)
\\&<
\eta+\frac {ND^*}n.
\end{align*}
This implies that $D(\nu,\mu)\le \eta$ 
As $n\to\infty$, we have $\cE(X,n)\to\nu$ and hence .

When $\nu$ is not ergodic, the result follows from ergodic decomposition.
\end{proof}

\begin{lemma}\label{traceclose}
Let $\eta,\delta_1,\delta_2,\ep>0$ and $T, M\in\ZZ^+$ such that
\begin{equation}\label{tdelest}
\frac{2D^*}\eta<T\le\frac1{\delta_1}\text{ and }
\var(\ep)+(\delta_1+\delta_2)D^*<\eta.
\end{equation}
Suppose that $\sC$ is a sequence in $E(B(\mu,\eta),M)$ that is 
$(M,\delta_1,\delta_2,\sg,\ep)$-traced by $z$, where
$\sg$ is $(n,\delta_1)$-spaced.
Then $z\in Z_{TM,3\eta}$.
\end{lemma}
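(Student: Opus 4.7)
The plan is to show that for every $k \in \NN$, $\cE(f^k(z), TM)$ lies in $\cB_{3\eta}(\mu)$; by \eqref{utrap} this is exactly the condition $z \in Z_{TM, 3\eta}$. Fix such a $k$ and write $W_k := \{k, k+1, \ldots, k + TM - 1\}$ for the index window of length $TM$.

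First I would carry out a counting argument on the block structure inside $W_k$. Since $\sg = \{t_j\}_{j=1}^\infty$ is $(M,\delta_1)$-spaced, each tracing interval $[t_j, t_j + M - 1]$ has length $M$ and consecutive intervals are separated by gaps of length strictly less than $M\delta_1$. Let $J_k := \{j : [t_j, t_j + M - 1] \subset W_k\}$ index the complete blocks inside $W_k$. Bounding $TM$ by the total length of these complete blocks, the $|J_k|-1$ inner gaps, and at most two partial ``block plus gap'' pieces at the edges of $W_k$ gives
\[
TM < |J_k| M + (|J_k|-1) M\delta_1 + 2M(1+\delta_1),
\]
so $|J_k|(1+\delta_1) > T - 2 - \delta_1$. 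Setting $p := |J_k|/T$, this yields $1 - p < \delta_1/(1+\delta_1) + (2+\delta_1)/[(1+\delta_1)T]$.

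Next I would decompose the empirical measure as the convex combination
\[
\cE(f^k(z), TM) = p\,\tilde{\mu}_A + (1-p)\,\tilde{\mu}_B,
\]
where $\tilde{\mu}_A := |J_k|^{-1}\sum_{j \in J_k} \cE(f^{t_j}(z), M)$ (the complete blocks cover exactly $|J_k| M = pTM$ indices of $W_k$) and $\tilde{\mu}_B$ averages the Dirac masses at the remaining $(1-p)TM$ indices. To estimate $D(\tilde{\mu}_A, \mu)$, fix $j \in J_k$: by the tracing property \eqref{ndeltrace}, $d(f^{t_j+i}(z), f^i(x_j)) \le \ep$ on all but fewer than $\delta_2 M$ indices $i \in \ZZ_M$, so the convex combination bound for $D$ together with the definition of $\var(\ep)$ produces $D(\cE(f^{t_j}(z), M), \cE(x_j, M)) \le \var(\ep) + \delta_2 D^*$. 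Since $x_j \in E(B(\mu, \eta), M)$ we also have $D(\cE(x_j, M), \mu) < \eta$, and averaging over $j \in J_k$ via the triangle inequality and the same convexity yields
\[
D(\tilde{\mu}_A, \mu) < \var(\ep) + \delta_2 D^* + \eta < 2\eta - \delta_1 D^*,
\]
where the last step invokes the hypothesis $\var(\ep) + (\delta_1+\delta_2)D^* < \eta$.

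Finally, using $D(\cE(f^k(z), TM), \tilde{\mu}_A) \le (1-p) D(\tilde{\mu}_B, \tilde{\mu}_A) \le (1-p) D^*$ and the triangle inequality,
\[
D(\cE(f^k(z), TM), \mu) < (1-p) D^* + 2\eta - \delta_1 D^*.
\]
The block count gives $(1-p) D^* < \delta_1 D^* + (2+\delta_1) D^*/[(1+\delta_1) T]$; the key arithmetic observation is that $(2+\delta_1)/(1+\delta_1) < 2$ for $\delta_1 > 0$, so combined with $T > 2D^*/\eta$ the second term is strictly less than $\eta$. Hence $(1-p)D^* < \delta_1 D^* + \eta$, and the display collapses to $D(\cE(f^k(z), TM), \mu) < 3\eta$, proving $z \in Z_{TM, 3\eta}$. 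The delicate part of the argument is the edge bookkeeping that gives the sharp block-count bound; the two constraints on $T$ play complementary roles, with the lower bound $T > 2D^*/\eta$ absorbing the $D^*$-tail from the uncovered fraction into $\eta$, and the upper bound $T \le 1/\delta_1$ keeping the aggregate gap length proportionally small in $W_k$.
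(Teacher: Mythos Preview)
Your proof is correct and follows essentially the same strategy as the paper's: split the length-$TM$ window into traced pieces whose empirical measures are close to $\mu$ and a small remainder bounded crudely by $D^*$. The only difference is bookkeeping---the paper partitions the window into exactly $T-2$ consecutive ``block-plus-gap'' intervals $[t_{k+j},t_{k+j+1})$ (using $T\le 1/\delta_1$ to guarantee they all fit, so no counting is needed) and bounds each by $2\eta$, whereas you count bare $M$-blocks via the inequality $TM < |J_k|M + (|J_k|-1)M\delta_1 + 2M(1+\delta_1)$ and then do a slightly more delicate arithmetic to close the $3\eta$ bound; both routes arrive at the same place.
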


\begin{proof}

Given any $n\in\NN$, we need to show that 
$D\left(\cE(f^n(z),TM),\mu\right)<3\eta$. 

Denote $\sC=\{x_k\}_{k=1}^\infty$ and $\sg=\{t_k\}_{k=1}^\infty$. 
There is unique $k$ such that $t_{k}<n\le t_{k+1}$.
Denote
$$s:=(t_{k+1}-n)+\sum_{j=1}^{T-2}(t_{k+j+1}-t_{k+j})=t_{k+T-1}-n.$$
By \eqref{tdelest},
we have
$$TM>(T-1)M(1+\delta_1)>s\ge (t_{k+1}-n)+(T-2)M.$$
We can write
\begin{align*}
\cE(f^n(z),TM)={}&
\frac{t_{k+1}-n}{TM}\cE(f^n(y),{t_{k+1}-n})\\&+
\sum_{j=1}^{T-2}\frac{t_{k+j+1}-t_{k+j}}{TM}\cE(f^{t_{k+j}}(y),t_{k+j+1}-t_{k+j})
\\&+\frac{TM-s}{TM}\cE(f^{t_{k+T-1}}(y),TM-s).
\end{align*}

For each $j$, denote
$$r_j:=\left|\Bigl\{l\in\ZZ_M: 
d\bigl(f^{t_{k+j}+l}(z), f^l(x_{k+j})\bigr)>\ep\Bigr\}\right|<\delta_2M.$$
As $\sC$ is 
$(M,\delta_1,\delta_2,\sg,\ep)$-traced by $z$, by \eqref{ndeltrace}
and \eqref{tdelest}, 
we have
\begin{align*}
&D\left(\cE(f^{t_{k+j}}(z),t_{k+j+1}-t_{k+j}),\mu\right)\\
\le{}&\frac{M}{t_{k+j+1}-t_{k+j}}D\left(\cE(f^{t_{k+j}}(z),M),\mu\right)
\\&+\frac{1}{t_{k+j+1}-t_{k+j}}D\left(\cE(f^{t_{k+j}+M}(z),t_{k+j+1}-t_{k+j}-M),\mu\right)
\\<{}&
D\left(\cE(f^{t_{k+j}}(z),M),\cE(x_{k+j},M)\right)
+D\left(\cE(x_{k+j},M),\mu\right)
\\&+\frac{(t_{k+j+1}-t_{k+j}-M)D^*}{t_{k+j+1}-t_{k+j}}
\\<{}&\frac1M\left(\sum_{l=0}^{M-1}D\Bigl(\cE\left({f^{t_{k+j}+l}(z)},1\right),
\cE\left({f^{l}(x_{k+j})},1\right)\Bigr)\right)+\eta+\delta_1D^*
\\<{}&\left(\frac{M-r_j}{M}\var(\ep)+\frac{r_j}{M}D^*\right)+\eta+\delta_1D^*
\\<{}&2\eta.
\end{align*}
It follows that 
\begin{align*}
D\left(\cE(f^n(z),TM),\mu\right)\le{}&
\frac{t_{k+1}-n}{TM}D^*+
\frac{t_{k+T-1}-t_{k+1}}{TM}\cdot2\eta
+\frac{TM-s}{TM}D^*
\\\le{}&\frac{2M}{TM}D^*+2\eta
\\<{}&3\eta.
\end{align*}
\end{proof}

We shall also need 
the following facts 
from the work of Pfister and Sullivan 
\cite{PfSu} for 
entropy estimate.

\begin{definition}
Let $S$ be a subset of $X$.
For $n\in\ZZ^+$, $\delta>0$ and $\ep>0$, 
we say that $S$ is $(n,\delta,\ep)$-separated if for any distinct points
$x,y\in S$, we have
$$\left|\left\{k\in\ZZ_n: d(f^{k}(x), f^k(y))>\ep\right\}\right|>\delta n.$$
\end{definition}

By definition, if $0<\delta<\delta'$, then every
$(n,\delta',\ep)$-separated set is also $(n,\delta,\ep)$-separated.


\begin{proposition}[{\cite[Proposition 2.1]{PfSu}}]\label{nepcount}
Let $(X,f)$ be any topological dynamical system.
Suppose that $\nu\in\cm_e(X,f)$ 
and $h<h_\nu(f)$. Then there are $\delta>0$
and $\gamma>0$ such that for any
neighborhood $U$ of $\nu$, there is $N^*=N^*(h,\delta,\gamma,U)>0$ such that for any $n\ge N^*$ there
is an $(n,\delta, \gamma)$-separated set $\Gamma_n\subset E(U,n)$ with
$|\Gamma_n|\ge e^{nh}$.
\end{proposition}

\begin{lemma}[{
\cite[Lemma 2.1]{PfSu}}]\label{qndelta}
For $n\in\ZZ^+$ and $\delta\in(0,\frac12)$, denote 
$$Q(n,\delta):=|\{A\subset\ZZ_n:|A|\ge(1-\delta)n\}|.$$
Then
\begin{equation}
\label{eqlnq}
\frac{\ln Q(n,\delta)}{n}\le-\delta\ln\delta-(1-\delta)\ln(1-\delta).
\end{equation}
\end{lemma}

\begin{remark}
Note that
\begin{equation}\label{delest}
\lim_{\delta\to0}\left(-\delta\ln\delta-(1-\delta)\ln(1-\delta)\right)=0.
\end{equation}
\end{remark}

\section{Almost Entropy-Approximability}\label{ldsection}
In this section we prove Theorem \ref{thaea}. 
This is the crucial part of the article. Compared with the
result of Pfister and Sullivan \cite{PfSu},
we construct new
compact invariant sets and obtain fine  estimates of their entropies,
especially from the above.
This is carried out by combining
known techniques with
an argument originally developed in \cite{Sunze} by the author.


Theorem \ref{thaea} can be directly proved for any \emph{invariant}
measure $\mu$ by approximating it by a convex combination
of ergodic ones.
Here we 
take advantage of Proposition \ref{entropydense}
to make our exposition
more concise. 
So 
we just need 
to show that every \emph{ergodic} measure
is entropy-approximable.

\begin{proposition}\label{prmain}
Let $(X,f)$ be a system with the approximate product property.
Suppose that $\mu_0\in\cm_e(X,f)$,
$h_0\in\bigl(0,h_{\mu_0}(f)\bigr)$ and
$\eta_0,\beta_0,\ep_0>0$. Then there are 
$\gamma\in(0,\ep_0)$
and a compact $f$-invariant subset 
$\Lambda=\Lambda(\mu_0, h_0, \eta_0, \beta_0, \gamma)$ such that
\begin{enumerate}
\item\label{measureclose}
$D(\nu,\mu_0)<\eta_0$ 
 for every 
 $\nu\in\cm(\Lambda,f)$.
\item \label{mainhest}
$h(\Lambda,f)>h_0$ and $h(\Lambda,f,\gamma)<h_0+\beta_0$.
\end{enumerate}
\end{proposition}

\begin{proof}[Proof of Theorem \ref{thaea}]
Let $\mu\in\cm(X,f)$, $U$ be a neighborhood of of $\mu$,
$h\in(0,h_\mu(f))$ and $\ep,\beta>0$.
There is $\eta_0>0$ such that $B(\mu,2\eta_0)\subset U$.
By Proposition \ref{entropydense}, there is $\mu_0\in\cm_e(X,f)$
such that
$$D(\mu,\mu_0)<\eta_0\text{ and }h_{\mu_0}(f)>h.$$
By Proposition \ref{prmain}, there are $\gamma\in(0,\ep)$
and a compact $f$-invariant $\Lambda$ such that
$D(\nu,\mu_0)<\eta_0$ for every 
 $\nu\in\cm(\Lambda,f)$, $h(\Lambda,f)>h$ and $h(\Lambda,f,\gamma)<h+\beta$.
 It also follows that
$$\cm(\Lambda,f)\subset B(\mu,2\eta_0)\subset U.$$
\end{proof}

We shall prove Proposition \ref{prmain} in
Subsection \ref{mainconstruction}
and \ref{mainest}.
The proof is completed by Proposition \ref{cptinv} and \ref{entestlam}. 
In Subsection \ref{appminimal}
we discuss two corollaries concerning minimality.

\subsection{Construction}\label{mainconstruction}
Suppose that 
$(X,f)$ has the approximate product property
we are given 
$\mu_0\in\cm_e(X,f)$,
$h_0\in\left(0,h_{\mu_0}(f)\right)$ and
$\eta_0,\beta_0,\ep_0>0$.
We fix
$$\text{$\eta:=\dfrac{\eta_0}4$, 
$\beta:=\frac1{20}\min\bigl\{{\beta_0},{h_{\mu_0}(f)-h_0},
h_0\bigr\}$, $h_1:=h_0+10\beta$}$$
and $T\in\ZZ^+$ such that
\begin{equation}\label{eqfix}
T\eta>2D^*.
\end{equation}
Note that $h_1+\beta<h_{\mu_0}(f)$. By Lemma \ref{nepcount}, there are $\delta_0>0$, $\gamma_0>0$ and
$N^*=N^*(h_1+\beta,\delta_0,\gamma_0,
B(\mu_0,\eta))$ such that for any $n\ge N^*$ there is an $(n,\delta_0,\gamma_0)$-separated
set $\Gamma_n^*\subset E(B(\mu_0,\eta),n)$ with
\begin{equation}\label{neplarge}
|\Gamma_n^*|>e^{n(h_1+\beta)}.
\end{equation}
By \eqref{varep}, we  can fix $\ep>0$ such that
\begin{equation}\label{estep}
\var(\ep)<\frac14\eta\text{ and }\ep<\frac13\min\{\ep_0,\gamma_0\}.
\end{equation}
We 
fix $\delta_1>0$ such that
\begin{equation}\label{estdel1}
\delta_1<\min\left\{\frac1{T},\frac{\beta}{h_1},\frac{\beta}{\ln r(\ep)} \right\}.
\end{equation}
By \eqref{delest}, we can fix $\delta_2\in(0,\frac12)$ such that
\begin{equation}\label{estdel2}
\delta_2<\min\left\{\frac{\delta_0}2,\frac1{2T},\frac{\beta}{\ln r(\ep)}\right\}\text{ and }
0<-\delta_2\ln\delta_2-(1-\delta_2)\ln(1-\delta_2)<\beta.
\end{equation}
Let $M(\ep,\delta_1,\delta_2)$ as in Definition \ref{defapp}.
We fix $M\in\ZZ^+$ such that
\begin{equation}\label{mmest}
M>\max\left\{M(\ep,\delta_1,\delta_2),N^*
\right\},\;
0<\frac{\ln(\delta_1M)}{M}<\beta
\text{
and }e^{M(h_1+\beta)}>e^{Mh_1}+1.
\end{equation}
By Lemma \ref{nepcount} and \eqref{neplarge}, there is an 
 $(M,\delta_0,\gamma_0)$-separated set $\Gamma_M^*\subset E(B(\mu_0,\eta),M)$
with 
$|\Gamma_M^*|>e^{M(h_1+\beta)}$.
We fix a subset $\Gamma_M\subset\Gamma_M^*$ such that
\begin{equation}\label{gammamest}
e^{Mh_1}\le|\Gamma_M|<e^{M(h_1+\beta)}.
\end{equation}

Let 
\begin{equation}\label{eqm1}
M_1:=\lfloor\delta_1M\rfloor\le\delta_1M
\end{equation}
be the largest integer no more than
$\delta_1M$. 
Denote  $\Sigma:=\Sigma_{M_1}$ and $\Gamma:=(\Gamma_M)^{\ZZ^+}$.
For each $\xi=\bigl\{\xi(k)\bigr\}_{k=1}^\infty\in\Sigma$, denote
$$t_1(\xi):=0,\;
t_k(\xi):=\sum_{j=1}^{k-1}\bigl(M+\xi(j)\bigr)
\text{ for each }k\in\ZZ^+\text{ and }\sg_\xi:=\bigl\{t_k(\xi)\bigr\}_{k=1}^\infty.$$
For each $\xi\in\Sigma$ and each sequence 
$\sC=\bigl\{x_k(\sC)\bigr\}_{k=1}^\infty
\in (\Gamma_M)^{\ZZ^+}$,
denote
\begin{equation}\label{defycxi}
Y_{\sC,\xi}:=\bigl\{y\in X: 
\sC\text{ is $(M,\delta_1,\delta_2,\sg_\xi,\ep)$-traced by $y$}\bigr\}.
\end{equation}
Let
$$Y:=\bigcup_{\sC\in\Gamma,\xi\in\Sigma}Y_{\sC,\xi}.$$
Note that  by 
\eqref{eqfix},
\eqref{estep}, \eqref{estdel1} and \eqref{estdel2}, we have
$$\var(\ep)+(\delta_1+\delta_2)D^*<\frac14\eta+\frac{3D^*}{2T}<\eta.$$
Hence \eqref{tdelest} holds. By Lemma \ref{traceclose}, we have $Y\subset
Z_{TM,3\eta}$.

Denote by $\sigma_\Gamma$ and $\sigma_\Sigma$ the shift maps on $\Gamma$
and $\Sigma$, respectively. 

\begin{lemma}\label{shiftinv}
For every $\sC\in\Gamma$ and $\xi\in\Sigma$, we have
\begin{equation}\label{eqincl}
f^{t_2(\xi)}(Y_{\sC,\xi})\subset Y_{\sigma_\Gamma(\sC),\sigma_\Sigma(\xi)}.
\end{equation}
\end{lemma}

\begin{proof}
Take any $y\in Y_{\sC,\xi}$. Then $\sC$ is $(M,\delta_1,\delta_2,\sg_\xi,\ep)$-traced by $y$.
By Definition \ref{gapshadow} and \ref{defapp}, this implies that
$\sigma_{\Gamma}(\sC)$ is 
$(M,\delta_1,\delta_2,\sg_{\sigma_{\Sigma}(\xi)},\ep)$-traced by $f^{t_2(\xi)}(y)$.
It follows that
$f^{t_2(\xi)}(y)\in Y_{\sigma_\Gamma(\sC),\sigma_\Sigma(\xi)}$
and the inclusion \eqref{eqincl} holds.
\end{proof}

The following lemma shows that $Y$ is closed in $X$, hence is a compact set. 

\begin{lemma}
Let $\{y_n\}_{n=1}^\infty$ be a sequence in $Y$ such that $y_n\to \tilde y$ in $X$.
Then there
are $\tilde\sC\in\Gamma$ and $\tilde\xi\in\Sigma$ such that
$\tilde y\in Y_{\tilde\sC,\tilde\xi}$.
Hence $Y$ is compact.
\end{lemma}

\begin{proof}
Denote
$$\Delta:=\{A\subset\ZZ_M:|A|\ge(1-\delta_2)M\}.$$
By Lemma \ref{qndelta}, $|\Delta|=Q(M,\delta_2)$ and
$\Delta$ is finite.
For each $k\in\ZZ^+$ and $y\in Y_{\sC,\xi}$, denote
$$A_k(y):=\left\{\tau\in\ZZ_M: d(f^{t_k(\xi)+\tau}(y), f^\tau(x_k(\sC)))\le\ep\right\}\in
\Delta.$$
Assume that $y_n\in Y_{\sC_n,\xi_n}$
for each $n$.
Note that $\Gamma$, $\Sigma$ and $\Delta^{\ZZ^+}$ are compact metric 
symbolic spaces.
We can find a subsequence $\{n_j\}_{j=1}^\infty$, $\tilde\sC
\in\Gamma$, $\tilde\xi\in\Sigma$ and
$\{A_k\}_{k=1}^\infty\in \Delta^{\ZZ^+}$
such that 
$$\sC_{n_j}\to\tilde\sC, \xi_{n_j}\to\tilde\xi
\text{ and }\bigl\{A_k(y_{n_j})\bigr\}_{k=1}^\infty\to\{A_k\}_{k=1}^\infty.$$
For each $k\in\ZZ^+$, 
there is $N_k$ such that for every $n_j>N_k$,
we have
$$x_k(\sC_{n_j})=x_k(\tilde\sC),t_k(\xi_{n_j})=t_k(\tilde\xi)\text{ and }A_k(y_{n_j})=A_k
.$$
For each $\tau\in A_k$,  we have
\begin{align*}
d\Bigl(f^{t_k(\tilde\xi)+\tau}(\tilde y), f^\tau\bigl(x_k(\tilde\sC)\bigr)\Bigr)
&=\lim_{n_j\to\infty} d\Bigl(f^{t_k(\xi_{n_j})+\tau}(y_{n_j}), 
f^\tau\bigl(x_k(\sC_{n_j})\bigr)\Bigr)
\le\ep.
\end{align*}
This implies that $\tilde y\in Y_{\tilde\sC,\tilde\xi}$.
\end{proof}

\begin{proposition}\label{cptinv}
Let
\begin{equation}\label{eqlamdef}
\Lambda:=\bigcup_{k=0}^{M+M_1-1} f^k(Y).
\end{equation}
Then $\Lambda$ is a compact $f$-invariant subset of $Z_{TM,3\eta}$. 
In particular, $\Lambda$ verifies
Conclusion \eqref{measureclose} in Proposition \ref{prmain}.


\end{proposition}

\begin{proof}
We have that $\Lambda$ is compact since $Y$ is compact.
We have $\Lambda\subset
Z_{TM,3\eta}$ since
$Y\subset
Z_{TM,3\eta}$
and $f(Z_{TM,3\eta})\subset Z_{TM,3\eta}$.

Now we need to show that $f(\Lambda)\subset\Lambda$.
Take any 
$z\in\Lambda$.
By \eqref{eqlamdef}, there is $y\in Y$ and $\tau\in\{0,\cdots,M+M_1-1\}$ such that
$f^\tau(y)=z$. We have two cases to consider:
\begin{enumerate}[{Case }(1):]
\item\label{returnit1}
If $\tau<M+M_1-1$, then $f(z)=f^{\tau+1}(y)\in\Lambda$.
\item Suppose that $\tau=M+M_1-1$.
There are $\sC$, $\xi$ such that $y\in Y_{\sC,\xi}$. Note that
$M\le t_2(\xi)\le M+M_1$ and by Lemma \ref{shiftinv}, we have
$f^{t_2(\xi)}(y)\in Y$.
It follows from Case \eqref{returnit1} that
$$f(z)=f^{\tau+1}(y)=f^{M+M_1-t_2(\xi)}\left(f^{t_2(\xi)}(y)\right)\in
f^{M+M_1-t_2(\xi)}(Y)\subset\Lambda.$$
\end{enumerate}
As $\Lambda\subset Z_{TM,3\eta}$, by Lemma \ref{measclose},
we have
$$D(\nu,\mu_0)\le3\eta<\eta_0\text{ for every }\nu\in\cm(\Lambda, f).
$$

\end{proof}

\subsection{Entropy estimate}\label{mainest}
\begin{lemma}\label{nepsep}
Suppose that $y\in Y_{\sC,\xi}$ and $y'\in Y_{\sC',\xi'}$
such that
\begin{align*}
t_n(\xi)=t_n(\xi')
\text{ and }
x_n(\sC)\ne x_n(\sC').
\end{align*}
Then $y,y'$ are 
$(nM(1+\delta_1),\ep)$-separated. 
\end{lemma}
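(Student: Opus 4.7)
The plan is to exploit three facts in combination: the tracing inequalities for $y$ and $y'$, the separation of $\Gamma_M$, and the smallness of $\delta_2$ relative to $\delta_0$ (as arranged in \eqref{estdel2}). Specifically, let $t := t_n(\xi) = t_n(\xi')$. Since $\sC$ is $(M,\delta_1,\delta_2,\sg_\xi,\ep)$-traced by $y$, the set
$$B := \{j \in \ZZ_M : d(f^{t+j}(y), f^j(x_n(\sC))) > \ep\}$$
has cardinality $<\delta_2 M$, and analogously the bad set $B'$ for $y'$ (along the block starting at $t$) has size $<\delta_2 M$. Meanwhile, $x_n(\sC), x_n(\sC')$ are distinct elements of the $(M,\delta_0,\gamma_0)$-separated set $\Gamma_M$, so
$$G := \{j \in \ZZ_M : d(f^j(x_n(\sC)), f^j(x_n(\sC'))) > \gamma_0\}$$
has cardinality $>\delta_0 M$.

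The first step is to find $j \in G \setminus (B \cup B')$. By \eqref{estdel2} we have $\delta_2 < \delta_0/2$, so
$$|G \setminus (B \cup B')| > \delta_0 M - 2\delta_2 M > 0,$$
and such a $j$ exists. For this $j$ the triangle inequality gives
$$d(f^{t+j}(y), f^{t+j}(y')) \ge d(f^j(x_n(\sC)), f^j(x_n(\sC'))) - d(f^{t+j}(y), f^j(x_n(\sC))) - d(f^{t+j}(y'), f^j(x_n(\sC'))) > \gamma_0 - 2\ep.$$
By \eqref{estep}, $\ep < \gamma_0/3$, so $\gamma_0 - 2\ep > \gamma_0/3 > \ep$, and thus $y$ and $y'$ are separated by more than $\ep$ at time $t+j$.

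The final step is the bookkeeping check that $t + j$ falls within the specified window. Since $\xi(i) \le M_1 = \lfloor \delta_1 M\rfloor$ for every $i$, we have
$$t = \sum_{i=1}^{n-1}(M + \xi(i)) \le (n-1)(M + \delta_1 M) = (n-1)M(1+\delta_1),$$
and together with $j \le M - 1 < M(1+\delta_1)$ this yields $t + j < nM(1+\delta_1)$. Hence $y$ and $y'$ are $(\lfloor nM(1+\delta_1)\rfloor, \ep)$-separated in the usual Bowen sense, which is the claim.

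I do not expect any genuine obstacle here: the entire argument is a pigeonhole selection followed by one triangle inequality, with all the relevant parameter inequalities ($\delta_2 < \delta_0/2$ and $\ep < \gamma_0/3$) already built into the setup of Section \ref{ldsection}. The only care needed is to keep track of the fact that the two tracings are anchored at the \emph{same} time $t$, which is precisely the role of the hypothesis $t_n(\xi) = t_n(\xi')$.
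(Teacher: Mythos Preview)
Your proof is correct and follows essentially the same approach as the paper: both arguments use the pigeonhole principle to find a common time index in $\ZZ_M$ at which both tracings are $\ep$-close to their targets and the two target orbits are $\gamma_0$-far apart, then apply the triangle inequality together with $\gamma_0>3\ep$. The only cosmetic difference is that the paper works with the ``good'' sets $A,A'$ (of size $\ge(1-\delta_2)M$) and intersects them, while you work with the complementary ``bad'' sets $B,B'$ and subtract them from $G$; the two formulations are equivalent.
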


\begin{proof}
Denote $t:=t_n(\xi)=t_n(\xi')$. Denote
\begin{align*}
A:=&\left\{j\in\ZZ_M: d(f^{t+j}(y), f^j(x_n(\sC)))\le\ep\right\}\text{ and}
\\A':=&\left\{j\in\ZZ_M: d(f^{t+j}(y'), f^j(x_n(\sC')))\le\ep\right\}.
\end{align*}
By \eqref{defycxi} and 
Definition \ref{gapshadow}, we have 
$$|A|,|A'|\ge(1-\delta_2)M.$$
It follows from \eqref{estdel2} that
$$|A\cap A'|\ge(1-2\delta_2)M>(1-\delta_0)M.$$
As $x_n(\sC)$ and $x_n(\sC')$ are distinct elements in $\Gamma_M$, they
are $(M,\delta_0,\gamma_0)$-separated. Then there must be $\tau\in A\cap
A'$ such that
$$d\Bigl(f^{\tau}\bigl(x_n(\sC)\bigr), f^{\tau}\bigl(x_n(\sC')\bigr)\Bigr)>\gamma_0>3\ep.$$
It follows that
\begin{align*}
d(f^{t+\tau}(y), f^{t+\tau}(y'))
\ge&d(f^{\tau}(x_n(\sC)), f^{\tau}(x_n(\sC')))
\\&-d(f^{t+\tau}(y), f^{\tau}(x_n(\sC)))
\\&-d(f^{t+\tau}(y'), f^{\tau}(x_n(\sC')))
\\>&\ep.
\end{align*}
Moreover, we have
$$t+\tau\le\sum_{k=1}^n\bigl(t_{k+1}(\xi)-t_k(\xi)\bigr)\le nM(1+\delta_1).$$
Hence $y,y'$ are 
$(nM(1+\delta_1),\ep)$-separated. 
\end{proof}

Denote by
$$C^\Gamma_{p_1\cdots p_n}=\{\sC\in\Gamma: x_k(\sC)=p_k\text{ for each }k=1,\cdots,n\}$$
a cylinder of rank $n$ in $\Gamma$ and
$$C^\Sigma_{w_1\cdots w_n}=\{\xi\in\Sigma: \xi(k)=w_k\text{ for each }k=1,\cdots,n\}$$
a cylinder of rank $n$ in $\Sigma$. 
Denote by $\cK^{\Gamma}_n$ and $\cK^{\Sigma}_n$
the 
collections of all such cylinders, respectively. Denote
$$\cK^{\Gamma}:=\bigcup_{n=1}^\infty \cK^{\Gamma}_n
\text{ and }\cK^{\Sigma}:=\bigcup_{n=1}^\infty \cK^{\Sigma}_n.$$
For each cylinder $C^\Gamma\in \cK^{\Gamma}$ and $C^\Sigma\in \cK^{\Sigma}$,
 denote
$$Y_{C^\Gamma,C^\Sigma}=\bigcup_{\sC\in C^\Gamma,\xi\in C^\Sigma}Y_{\sC,\xi}.$$
\begin{lemma}\label{cornep}
Suppose that $y_i\in Y_{C^\Gamma_i,C^\Sigma}$ for $i=1,2$ such that
$C^\Gamma_1,C^\Gamma_2$ are distinct cylinders
in $\cK^{\Gamma}_n$ and $C^\Sigma\in \cK^{\Sigma}_{n-1}$
Then $y_1,y_2$ are 
$(nM(1+\delta_1),\ep)$-separated. 

\end{lemma}

\begin{proof}
Assume that
$C^\Gamma_1=C^\Gamma_{p_1\cdots p_n}$
and $C^\Gamma_2=C^\Gamma_{q_1\cdots q_n}$. 
Let $n_0:=\min\{k:p_k\ne q_k\}$.
There are 
$$\sC\in C^\Gamma_{p_1\cdots p_n}\subset C^{\Gamma}_{p_1\cdots p_{n_0}},
\;\sC'\in C^\Gamma_{q_1\cdots q_n}\subset C^{\Gamma}_{q_1\cdots q_{n_0}}$$
and $\xi,\xi'\in C^{\Sigma}$ such that
$y_1\in Y_{\sC,\xi}$, $y_2\in Y_{\sC',\xi'}$
and
$$x_{n_0}(\sC)=p_{n_0}\ne q_{n_0}=x_{n_0}(\sC').$$
As  $\xi,\xi'\in C^{\Sigma}\in \cK^{\Sigma}_{n-1}$
 and $n_0\le n$, we have
$$\text{$t_k(\xi)=t_k(\xi')$ for each $k=1,\cdots, n_0-1$}.$$
It follows that $t_{n_0}(\xi)=t_{n_0}(\xi')$.
Hence by Lemma \ref{nepsep}, 
$y_1,y_2$ are 
$(nM(1+\delta_1),\ep)$-separated. 
\end{proof}


\begin{lemma}\label{nepestimate}
For   every $n\in\ZZ^+$, 
every cylinder $C^\Gamma=C^\Gamma_{p_1\cdots p_n}\in\cK^{\Gamma}_n$
and  every cylinder 
$C^\Sigma\in\cK^{\Sigma}_n$
, we have 
$$s\left(Y_{C^\Gamma,C^\Sigma},nM,2\ep\right)\le
\left(Q(M,\delta_2)r(\ep)^{\delta_2M}r(M_1,\ep)\right)^{n}.$$
\end{lemma}

\begin{proof}
Let $S(M_1,\ep)$ 
be a fixed $(M_1,
\ep)$-spanning subset of $X$ with the minimal cardinality.
Then $|S(M_1,\ep)|=r(M_1,\ep)$. Let $S_1$ be a fixed $(1,\ep)$-spanning subset
of $X$ with the minimal cardinality $r(\ep)$.
Let $\sA:=(A_1,\cdots, A_{n})$ be an $n$-tuple in $\Delta^{n}$.
We fix some $\xi\in C^{\Sigma}$. 
Denote
\begin{align*}
Y_{C^\Gamma,C^\Sigma}(\sA):=\Bigl\{y\in Y_{C^\Gamma,C^\Sigma}:
{}&d\left(f^{t_{k}(\xi)+j}(y), f^j(p_k)\right)\le\ep\\&\text{
for every }j\in A_k, k=1,2,\cdots,n\Bigr\}
\end{align*}

Denote
$$\Omega(\sA):=\prod_{j=0}^{t_{n+1}(\xi)-1}\Omega_{j}(\sA),$$
where for $t_k(\xi)\le j<t_{k+1}(\xi)$, $k=1,2,\cdots, n$,
we put
\begin{align*}
\Omega_j(\sA):=\begin{cases}
\left\{f^{j-t_k(\xi)}(p_k)\right\}, &\text{ if } j-t_k(\xi)\in A_k;\\
S_1, &\text{ if }j-t_k(\xi)\in\ZZ_M\setminus A_k;\\
f^{j-t_k(\xi)-M}\bigl(S(M_1,\ep)\bigr), & \text{ if } t_k(\xi)+M\le j<t_{k+1}(\xi)\text{
for some }k.
\end{cases}
\end{align*}
Let $S_{\sA}$ be an $(t_{n+1}(\xi),2\ep)$-separated set in $Y_{C^\Gamma,C^\Sigma}(\sA)$.
For $y\in S_{\sA}$ and for each $j\in\ZZ_{t_{n+1}(\xi)}$,
we can find $\pi_j(y)\in\Omega_j(\sA)$ such that
$d\bigl(f^j(y),\pi_j(y)\bigr)\le\ep$.
Let
$$\pi(y):=\left(\pi_1(y),\cdots,\pi_{t_{n+1}(\xi)-1}(y)\right)$$
The fact that $S_{\sA}$ is $(t_{n+1}(\xi),2\ep)$-separated
implies that $\pi:S_{\sA}\to\Omega(\sA)$
is an injection.
It follows that
$$|S_{\sA}|\le|\Omega(\sA)|\le
\left(|S_1|^{\delta_2M}|S(M_1,\ep)|\right)^{n}=
\left(r(\ep)^{\delta_2M}r(M_1,\ep)\right)^{n}.$$
Note that
$$Y_{C^\Gamma,C^\Sigma}=\bigcup_{\sA\in\Delta^{n}}Y_{C^\Gamma,C^\Sigma}(\sA),$$
So
the maximal cardinality of  an $\left(t_{n+1}(\xi),2\ep\right)$-separated set in $Y_{C^\Gamma,C^\Sigma}$
is at most
$$
\sum_{\sA\in\Delta^{n}}|S_{\sA}|\le
\left(Q(M,\delta_2)r(\ep)^{\delta_2M}r(M_1,\ep)\right)^{n}.$$
The conclusion follows since $t_{n+1}(\xi)\ge nM$.
\end{proof}

\begin{lemma}\label{nepesty}
For  each 
$n\in\ZZ^+$,
we have
$$s(\Lambda,nM,2\ep)<(M+M_1)\left(e^{M(h_1+\beta)}M_1Q(M,\delta_2)
r(\ep)^{\delta_2M}r(M_1,\ep)\right)^{n+2}.$$
\end{lemma}

\begin{proof}
Given $n\in\ZZ^+$, 
we have
\begin{equation}\label{eqcountn}
|\cK^\Gamma_n|=|\Gamma_{M}|^n\text{ and }
|\cK^\Sigma_n|=M_1^n.
\end{equation}
It follows from \eqref{gammamest} and Lemma \ref{nepestimate} that
\begin{align*}
s(Y, nM,2\ep)&\le\sum_{C^\Gamma\in\cK^\Gamma_n,C^\Sigma\in\cK^\Sigma_n}
s\left(Y_{C^\Gamma,C^\Sigma},nM,2\ep\right)
\\&\le|\Gamma_{M}|^nM_1^n\left(Q(M,\delta_2)r(\ep)^{\delta_2M}r(M_1,\ep)\right)^{n}
\\&< \left(e^{M(h_1+\beta)}M_1Q(M,\delta_2)r(\ep)^{\delta_2M}r(M_1,\ep)\right)^{n}
\end{align*}
Note that if $S$ is an $(nM,2\ep)$-separated
subset of $f^\tau(Y)$, then $f^{-\tau}(S)$ includes an 
$(nM+\tau,2\ep)$-separated
subset of $Y$. 
For $\tau\in\ZZ_{M+M_1}$, this implies that
$$s\bigl(f^\tau(Y), nM,2\ep\bigr)\le
s\bigl(Y, nM+\tau,2\ep\bigr)
\le s\bigl(Y, (n+2)M,2\ep\bigr).$$
It follows from  \eqref{eqlamdef} that
\begin{align*}
s(\Lambda,nM,2\ep)&\le
\sum_{\tau=0}^{M+M_1-1}s\bigl(f^\tau(Y),nM,2\ep\bigr) 
\\&\le
({M+M_1})s\bigl(Y,(n+2)M,2\ep\bigr) 
\\ 
&<(M+M_1)\left(e^{M(h_1+\beta)}
M_1Q(M,\delta_2)r(\ep)^{\delta_2M}r(M_1,\ep)\right)^{n+2}.
\end{align*}
\end{proof}

\begin{proposition}\label{entestlam}
We have
$$
h(\Lambda, f)>h_0\text{ and }
h(\Lambda, f, 2\ep)< h_0+\beta_0.$$
Hence $\Lambda$ verifies Conclusion \eqref{mainhest} in
Proposition \ref{prmain} for $\gamma:=2\ep\in(0,\ep_0)$.
\end{proposition}

\begin{proof}
Given $n\in\ZZ^+$, the approximate product property guarantees that
for each $C^\Gamma_n\in\cK^\Gamma_n$, there is some
$C^{\Sigma}_{n-1}\in\cK^{\Sigma}_{n-1}$ with $Y_{C^\Gamma,
C_*^\Sigma}\ne\emptyset$.
By \eqref{gammamest} and \eqref{eqcountn}, 
there is $C_*^\Sigma\in \cK^\Sigma_{n-1}$ such that
$$\bigl|\{C^\Gamma\in \cK^\Gamma_n: Y_{C^\Gamma,
C_*^\Sigma}\ne\emptyset\}\bigr|\ge\frac{|\cK^\Gamma_n|}{|\cK^\Sigma_n|}=\frac{|\Gamma_M|^n}{M_1^{n-1}}
\ge\frac{e^{nMh_1}}{M_1^{n-1}},$$
By 
Lemma \ref{cornep},
we have
\begin{align*}
s(\Lambda,nM(1+\delta_1),\ep)&\ge s(Y,nM(1+\delta_1),\ep)
\\&\ge \bigl|\{C^\Gamma\in \cK^\Gamma_n: Y_{C^\Gamma,
C_*^\Sigma}\ne\emptyset\}\bigr|
\\&\ge\frac{e^{nMh_1}}{M_1^{n-1}}
\end{align*}
Hence by \eqref{estdel1}, \eqref{mmest} and \eqref{eqm1}, we have
\begin{align*}
h(\Lambda,f)&\ge
h(\Lambda,f,\ep)\\&\ge\limsup_{n\to\infty}\frac{\ln s(\Lambda, nM(1+\delta_1),\ep)}{nM(1+\delta_1)}
\\&\ge\limsup_{n\to\infty}\frac{nMh_1-(n-1)\ln M_1}{nM(1+\delta_1)}
\\&=\frac{h_1}{1+\delta_1}-\frac{\ln M_1}{M(1+\delta_1)}
\\&> h_1-\delta_1h_1-\frac{\ln(\delta_1M)}{M}
\\&>h_1-2\beta
\\&> h_0.
\end{align*}

For every $t\in\ZZ^+$, there is $n=n(t)\in\ZZ^+$ such that $(n-1)M<t\le nM$.
Then it follows from Lemma \ref{nepesty} that
\begin{align*}
h(\Lambda,f,2\ep)&=\limsup_{t\to\infty}\frac{\ln s(\Lambda, t,2\ep)}t 
\\&\le\limsup_{n\to\infty}\frac{\ln s(\Lambda, nM,2\ep)}{(n-1)M}
\\&\le(h_1+\beta)+\delta_2\ln
r(\ep)+
\frac{\ln M_1+\ln Q(M,\delta_2)+\ln r(M_1,\ep)}{M}.
\end{align*}
By \eqref{finent}, \eqref{eqlnq}, \eqref{estdel1}, \eqref{estdel2},
\eqref{mmest}
and \eqref{eqm1}, we have 
\begin{align*}
&\delta_2\ln r(\ep)<\beta,\\
&\frac{\ln M_1}M\le\frac{\ln (\delta_1 M)}M<\beta,\\
&\frac{\ln Q(M,\delta_2)}{M}\le 
-\delta_2\ln\delta_2-(1-\delta_2)\ln(1-\delta_2)<\beta\text{ and}\\
&\frac{\ln r(M_1,\ep)}{M}\le 
\frac{M_1\ln r(\ep)}{M}
\le\delta_1\ln r(\ep)<\beta.
\end{align*}
It follows that
$$
h(\Lambda, f,2\ep)
<h_1+5\beta
<h_0+\beta_0.$$

\end{proof}

\subsection{Minimal systems}\label{appminimal}

In this subsection we prove Corollary \ref{cor_min},
which is split into
the following corollaries
of Proposition \ref{prmain}.
Recall that $(X,f)$ is called \emph{minimal} if
$X$ has no nonempty proper compact and $f$-invariant subset.


\begin{corollary}
Let $(X,f)$ be a system with the approximate
        product property and positive topological entropy. Then $(X,f)$ is
        not minimal.
\end{corollary}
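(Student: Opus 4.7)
The plan is to argue by contradiction, producing a nonempty closed proper $f$-invariant subset of $X$ by reusing the set $\Lambda$ constructed in the proof of Proposition \ref{localdense}. The key observation is that both the construction of $\Lambda$ and the covering-entropy upper bound on $h(\Lambda,f,2\ep)$ go through without invoking asymptotic entropy expansiveness; that hypothesis is used only when passing from $h(\Lambda,f,2\ep)$ to $h(\Lambda,f)$ via Proposition \ref{hlocest}, a step I bypass by letting $\ep\to 0$ \emph{after} minimality has been applied.

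Assume towards contradiction that $(X,f)$ is minimal. The variational principle combined with Proposition \ref{entropydense} yields an ergodic measure $\mu\in\cm_e(X,f)$ with $h_\mu(f)>0$. Fix $h_0\in(0,h_\mu(f))$ and then $\beta>0$ small enough that $h_0+5\beta<h_\mu(f)$. For every sufficiently small $\ep>0$ I set up the auxiliary parameters $\eta,T,\delta_1,\delta_2,M,\Gamma_M$ exactly as in Section \ref{ldsection}, retaining every condition from \eqref{estep}--\eqref{mmest} \emph{except} the constraint $h^*(f,2\ep)<\beta$, which is never needed for the construction itself. This produces a nonempty compact $f$-invariant set $\Lambda_\ep\subset X$.

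Following the computation in the proof of Lemma \ref{entestlam} up to the step just before Proposition \ref{hlocest} is invoked yields
$$h(\Lambda_\ep,f,2\ep)\le(h_0+\beta)+\frac{\ln M_1}{M}+\frac{\ln Q(M,\delta_2)}{M}+\frac{\ln r(M_1,\ep)}{M}+\delta_2\ln r(\ep)<h_0+5\beta,$$
the four correction terms each being $<\beta$ by the choices \eqref{estdel1}--\eqref{mmest}. Since $(X,f)$ is minimal, any nonempty closed $f$-invariant subset coincides with $X$, so $\Lambda_\ep=X$ and hence $h(X,f,2\ep)\le h_0+5\beta$ for every sufficiently small $\ep>0$. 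Because $\ep\mapsto h(X,f,2\ep)$ is non-increasing with $h(f)=\lim_{\ep\to 0}h(X,f,2\ep)$, this forces $h(f)\le h_0+5\beta<h_\mu(f)\le h(f)$, the desired contradiction.

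The point requiring most care is verifying that the constraint $h^*(f,2\ep)<\beta$ in \eqref{estep} is genuinely inessential for building $\Lambda_\ep$ and for the covering bound above; a line-by-line reading of Section \ref{ldsection} shows that tail entropy is invoked only in the very last inequality of Lemma \ref{entestlam}. A minor subtlety is that \eqref{estdel1} tacitly uses $h(f)<\infty$; if $h(f)=\infty$ one replaces $h(f)$ by the finite quantity $h_s(f,\ep)$ throughout (for each fixed small $\ep$), and the argument proceeds unchanged.
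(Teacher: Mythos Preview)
Your proof is correct and follows essentially the same strategy as the paper: drop the tail-entropy constraint from \eqref{estep}, carry out the construction of Section \ref{ldsection} to obtain a compact $f$-invariant set whose $2\ep$-scale entropy is strictly below $h_0+5\beta$, and conclude that this set cannot be all of $X$. The paper's version is marginally more direct: rather than assuming minimality, building $\Lambda_\ep$ for every small $\ep$, and passing to the limit $\ep\to 0$, it fixes a single $\ep$ with $h_0+\beta_0<h(X,f,2\ep)$ and observes via \eqref{fewnep} that $s(\Lambda,nM,2\ep)<s(X,nM,2\ep)$ for large $n$, so $\Lambda\subsetneq X$ immediately. Your limiting step is therefore unnecessary, though harmless.
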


\begin{proof}
Suppose that $h(f)>0$. By \eqref{eqhsup}, there is $\ep_0>0$ such that
$h(X,f,\ep_0)>0$. 
By the Variational Principle, there is an ergodic measure $\mu_0\in\cm_e(X,f)$
such that $h_{\mu_0}(f)>0$.
We can find $h_0\in\bigl(0,h_{\mu_0}(f)\bigr)$ and $\beta_0>0$ such that
$$0<h_0+\beta_0<h(X,f,\ep_0).$$
By Proposition \ref{prmain}, there are 
$\gamma\in(0,\ep_0)$
and a compact $f$-invariant subset 
$\Lambda$ such that
$$h(\Lambda, f, \gamma)<h_0+\beta_0<h(X,f,\ep_0)\le h(X,f,\gamma).$$
This implies that $\Lambda$ is a proper subset of $X$, hence $(X,f)$
is not minimal.
\end{proof}

\begin{corollary}
Let $(X,f)$ be a system with the approximate
        product property that is not uniquely ergodic. Then $(X,f)$ is
        not minimal.
\end{corollary}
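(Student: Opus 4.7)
The plan is to construct, via a minor modification of the $\Lambda$-construction in Section \ref{ldsection}, a non-trivial compact $f$-invariant subset $\Lambda \subsetneq X$; the existence of such a $\Lambda$ immediately contradicts minimality of $(X,f)$.

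Since $(X,f)$ is not uniquely ergodic, $\cm(X,f)$ is not a singleton, so by Corollary \ref{apppoul} it is a Poulsen simplex and in particular $\cm_e(X,f)$ contains two distinct ergodic measures $\mu_1 \ne \mu_2$. Set $D_0 := D(\mu_1, \mu_2) > 0$ and fix $\eta \in (0, D_0/6)$. Suppose we have built a compact $f$-invariant set $\Lambda \subset Z_{TM, 3\eta}(\mu_1)$. Lemma \ref{measclose} then yields $D(\nu, \mu_1) \le 3\eta < D_0/2$ for every $\nu \in \cm(\Lambda, f|_\Lambda)$. If $\Lambda$ were all of $X$, then $\mu_2$ would belong to $\cm(\Lambda, f|_\Lambda)$, giving $D(\mu_1, \mu_2) < D_0/2$ and contradicting the definition of $D_0$. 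Hence $\Lambda \subsetneq X$, which provides the desired proper non-empty closed $f$-invariant subset.

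The only genuine issue is that the construction of $\Lambda$ in Section \ref{ldsection} invokes Lemma \ref{nepcount} to produce a large $(M, \delta_0, \gamma_0)$-separated set $\Gamma_M \subset E(B(\mu_1, \eta), M)$, a step that requires $h_{\mu_1}(f) > 0$; in the present setting we have no positive entropy to spend, so this is the main obstacle. The point is that nothing in the preceding measure-theoretic argument uses any lower bound on the entropy of $\Lambda$, so this step can be completely bypassed: we simply take $\Gamma_M := \{x_M\}$ for any single point $x_M \in E(B(\mu_1, \eta), M)$, whose existence for all sufficiently large $M$ is guaranteed by Birkhoff's ergodic theorem applied to the ergodic measure $\mu_1$. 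With this choice, $\sC_0 := (x_M, x_M, \ldots)$ is the unique element of $\Gamma$; after choosing $T, \delta_1, \delta_2, \ep$ so that \eqref{tdelest} holds and $M > M(\delta_1, \delta_2, \ep)$, the approximate product property yields a non-empty compact set $Y = \bigcup_{\xi \in \Sigma} Y_{\sC_0, \xi} \subset Z_{TM, 3\eta}(\mu_1)$ by Lemma \ref{traceclose}, and $\Lambda := \bigcup_{k=0}^{M+M_1-1} f^k(Y)$ is compact and $f$-invariant by the verbatim analogue of Lemma \ref{cptinv}, completing the construction.
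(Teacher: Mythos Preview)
Your proposal is correct and follows essentially the same strategy as the paper: use the $\Lambda$-construction of Section~\ref{ldsection} to produce a compact $f$-invariant set whose invariant measures all lie within a small ball around $\mu_1$, and conclude non-minimality. The paper carries this out symmetrically, building $\Lambda_1$ near $\mu_1$ and $\Lambda_2$ near $\mu_2$ and observing $\Lambda_1\cap\Lambda_2=\emptyset$, whereas you build only $\Lambda_1$ and argue directly that $\mu_2$ cannot live on it; your version is a touch more economical, and you are more explicit than the paper about bypassing the positive-entropy step by taking $\Gamma_M$ to be a singleton supplied by Birkhoff's theorem.
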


\begin{proof}
Let $\mu_1,\mu_2$ be distinct ergodic measures and $0<\eta_0<\frac{1}{3}D(\mu_1,\mu_2)$.
By Proposition \ref{prmain}, there
are compact invariant sets $\Lambda_1$ and $\Lambda_2$ such that
$D(\nu,\mu_1)\le\eta_0$ for every $\nu\in\cm(\Lambda_1,f)$ 
and $D(\nu,\mu_2)\le\eta_0$ for every $\nu\in\cm(\Lambda_2,f)$. 
It follows that $\Lambda_1\cap\Lambda_2=\emptyset$. This implies that $(X,f)$ is not minimal. 
\end{proof}

\section{Intermediate Entropies}\label{secintent}

Theorem \ref{thmain} \eqref{thit1} is a corollary of Theorem \ref{thaea} and 
Proposition \ref{pr_ea}.

\begin{proposition}\label{pr_ea}
Suppose that $(X,f)$ is asymptotically entropy expansive system and
$\mu\in\cm(X,f)$ is almost entropy-approximable. 
Then $\mu$ is entropy-approximable.
\end{proposition}

\begin{proof}
Let 
$U$ be a neighborhood of $\mu$,
$h\in(0,h_{\mu}(f))$ and $\beta>0$.
As $(X,f)$ is asymptotically entropy expansive, by Definition \ref{defexp},
there is $\ep>0$ such that 
$$h^*(f,\ep')<\frac{\beta}{2}\text{ for any }\ep'\in(0,\ep).$$
As $(X,f)$ is almost entropy-approximable, there are a compact $f$-invariant
set $\Lambda$ and $\gamma\in(0,\ep)$ such that
$$\cm(\Lambda, f)\subset U, \; h(\Lambda,f)>h\text{ and }
h(\Lambda, f, \gamma)<h+\frac{\beta}2.$$
by Proposition \ref{hlocest}, we have
$$h(\Lambda, f)\le h(\Lambda, f, \gamma)+h^*(f,\gamma)<h+{\beta}.$$
Hence $\mu$ is entropy-approximable.
\end{proof}


By Proposition \ref{uppersemicts}, asymptotic entropy expansiveness guarantees that the entropy map
is upper semi-continuous.
Hence Conclusion \eqref{thit2} of Theorem \ref{thmain} is a corollary of 
Conclusion \eqref{thit1} and 
Proposition \ref{pr_eg}.

\begin{proposition}\label{pr_eg}
Suppose that every $\mu\in\cm(X,f)$ is entropy-approximable
and the entropy map $\mu\mapsto h_\mu(f)$ is upper semi-continuous.
Then $(X,f)$ is entropy-generic.
\end{proposition}

\begin{proof}

As the entropy map is upper semi-continuous,
we have that $\cm^{\alpha}(X,f)$ is a compact
metric subspace of $\cm(X,f)$, hence it is a Baire space.

For $0\le\al<\al'<h(f)$, denote 
$$\cm({\al,\al'}):=\{\mu\in\cm_e(X,f):\al\le h_\mu(f)<\alpha'\}
.$$
By upper semi-continuity, $\cm(0,\al')$ is an open set. It follows that
 $$\cm(\al,\al')=\cm(0,\al')\cap\cm^{\al}(X,f)$$ is an open set in 
the subspace $\cm^{\al}(X,f)$.
Let
$$\cm_e(\al,\al'):=\cm({\al,\al'})\cap\cm_e(X,f)$$
As $\cm_e(X,f)$ is a $G_\delta$ set,
we have that
$\cm_e(\al,\al')$ is a $G_\delta$ set in the subspace
$\cm^{\al}(X,f)$.


Suppose that we are given $\mu\in\cm^\al(X,f)$ and $\eta>0$.
By Proposition \ref{propvp}, upper semi-continuity of the entropy map guarantees that
on every compact $f$-invariant set $Y$ there is an ergodic
measure $\mu_Y$ such that $h_{\mu_Y}(f)=h(Y,f)$.
We fix $\mu_X\in\cm_e(X,f)$ with 
$$h_{\mu_X}(f)=h(f)>\al.$$
Recall that $D^*$ is the diameter of $\cm(X)$.
Denote
$$\mu':=\left(1-\frac{\eta}{3D^*}\right)\mu+\frac{\eta}{3D^*}\mu_X.$$
Then we have
$$D(\mu', \mu)<\frac\eta3\text{ and }h_{\mu'}(f)>\al.$$
As $\mu'$ is entropy-approximable,
there is a compact $f$-invariant
set $\Lambda$ such that
$$\cm(\Lambda, f)\subset B\left(\mu,\frac\eta3\right)\text{ and }
\al<h(\Lambda, f)<\min\bigl\{h_{\mu''}(f),\al'\bigr\}.$$
By Proposition \ref{propvp}, there is $\nu\in\cm_e(\Lambda,f)\subset B(\mu,\eta)$ such that
$$h_{\nu}=h(\Lambda,f)\in [\al,\al'),$$
It follows that $\nu\in\cm_e(\al,\al')$ and hence 
$$\cm_e(\al,\al')\cap 
B(\mu,\eta)
\ne\emptyset.$$
This 
implies that $\cm_e(\al,\al')$ is dense in $\cm^\al(X,f)$.

Consequently, each
$\cm_e(\al,\al')$ is residual in
$\cm^\al(X,f)$. 
Hence
$$\cm_e(X,f,\al)=
\bigcap_{k=1}^\infty\cm_e({\al,\al+\frac1k})$$
is residual in $\cm^\al(X,f)$.
\end{proof}



As a consequence of entropy-genericity,
Corollary \ref{cor_nbhd} \eqref{itnb1}
 follows from Theorem \ref{thmain} \eqref{thit2}
and Proposition \ref{localintent}.


\begin{proposition}\label{localintent}
If $(X,f)$ is entropy-generic, then
for every $\mu\in\cm(X,f)$ and every neighborhood $U$ of $\mu$, we have
        $$\begin{cases}
        \cH(X,f,U)\supset\bigl[0, h_\mu(f)\bigr], & \text{if }h_\mu(f)<h(f);\\
        \cH(X,f,U)\supset\bigl[0, h_\mu(f)\bigr), & \text{if }h_\mu(f)=h(f).
        \end{cases}$$
\end{proposition}

\begin{proof}
There is $\eta>0$ such that $B(\mu,2\eta)\subset U$.

Suppose that 
$$0\le\al<h_\mu(f)\le h(f).$$
Then
$\cm_e(X,f,\al)$ is residual in $\cm^\al(X,f)$,
hence it has nonempty intersection with the open subset
$B(\mu,\eta)\cap\cm^\al(X,f)$ in $\cm^\al(X,f)$, i.e.
there is an ergodic measure
$$\nu\in B(\mu,\eta)\cap\cm^\al(X,f)\subset U$$
such that $h_\nu(f)=\alpha$. 
It follows that 
\begin{equation}\label{eqlocalent}
\cH(X,f,U)\supset[0, h_\mu(f)).
\end{equation}

Suppose that $h_\mu(f)<h(f)$.
By Proposition \ref{propvp}, there is $\mu_0\in\cm(X,f)$ such that 
$h_{\mu_0}(f)>h_{\mu}(f)$. Denote
$$\mu':=\left(1-\frac{\eta}{D^*}\right)\mu+\frac{\eta}{D^*}\mu_0
\in
B(\mu,2\eta)\subset
U.$$
We still have $h_{\mu'}(f)>h_\mu(f)$.
It follows 
from \eqref{eqlocalent}
that
$$\cH(X,f,U)\supset[0, h_{\mu'}(f))\supset[0, h_\mu(f)].$$
\end{proof}

\section{Lyapunov Exponents and Pressures}
\label{secaap}
The notion of asymptotically additive potentials
was introduced in \cite{FH}. 
\begin{definition}\label{defaapot}
A sequence $\Psi=\{\psi_n\}_{n=1}^\infty$ of continuous real-valued functions on $X$
is called a \emph{sub-additive potential} for the system $(X,f)$ if
for every $x\in X$ and every $m,n\in\ZZ^+$, we have
$$\psi_{m+n}(x)\le\psi_n(x)+\psi_m\bigl(f^n(x)\bigr).$$
A sequence $\Phi=\{\phi_n\}_{n=1}^\infty$ of real-valued functions on $X$
is called an \emph{asymptotically sub-additive potential} for the system $(X,f)$, if for every $\eta>0$,
there is a sub-additive potential $\Psi=\{\psi_n\}_{n=1}^\infty$ such that
$$\limsup_{n\to\infty}\frac1n\sup\Bigl\{\bigl|\phi_n(x)-\psi_n(x)\bigr|:
x\in X\Bigr\}<\eta.$$
We say that $\Phi$ is \emph{asymptotically additive} if both $\Phi$ and $-\Phi$
are asymptotically sub-additive.
\end{definition}

Throughout this section, we assume that 
$\Phi=\{\phi_n\}_{n=1}^\infty$ is 
a fixed asymptotically additive potential for $(X,f)$.

\subsection{Intermediate Lyapunov exponents}\label{subile}
The \emph{Lyapunov exponent} for $\Phi$ with respect to an invariant measure
$\mu\in\cm(X,f)$ is defined as
$$\chi_\Phi(\mu):=\lim_{n\to\infty}\frac1n\int\phi_nd\mu.$$

\begin{proposition}[{\cite[Lemma A.4]{FH}}]\label{chicts}
The map $\chi_\Phi:\cm(X,f)\to\RR$ is continuous. 
\end{proposition}
Denote
$$\chi^\Phi_{\min}:=\inf\bigl\{\chi_\Phi(\mu):{\mu\in\cm(X,f)}\bigr\}\text{ and }
\chi^\Phi_{\max}:=\sup\bigl\{\chi_\Phi(\mu):{\mu\in\cm(X,f)}\bigr\}.$$
As $\cm(X,f)$ is compact, by Proposition \ref{chicts}, $\chi_\Phi$ attains its minimum and
maximum on $\cm(X,f)$. It is also clear that $\chi_\Phi$ is affine, hence the minimum and the maximum
can be obtained at extreme points of $\cm(X,f)$.
\begin{corollary}\label{corchimin}
There are
$\mu^\Phi_{\min},\mu^\Phi_{\max}\in\cm_e(X,f)$ such that
$$\chi_\Phi(\mu^\Phi_{\min})=\chi^\Phi_{\min}
\text{ and }\chi_\Phi(\mu^\Phi_{\max})=\chi^\Phi_{\max}.$$
\end{corollary}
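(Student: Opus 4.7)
The plan is to combine the continuity provided by Lemma~\ref{chiphicts} with affineness of $\chi_\Phi$ and the ergodic decomposition in the Choquet simplex $\cm(X,f)$. By Lemma~\ref{chiphicts}, $\chi_\Phi$ is continuous on the compact space $\cm(X,f)$, so the extremal values $\chi^\Phi_{\min}$ and $\chi^\Phi_{\max}$ are attained by some (a priori not ergodic) measures $\mu_{\min}, \mu_{\max} \in \cm(X,f)$. The remaining task is to upgrade these extremizers to ergodic ones.

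The key intermediate step is to verify that $\chi_\Phi$ is \emph{affine} on $\cm(X,f)$. For any continuous sub-additive potential $\Psi=\{\psi_n\}$, each functional $\mu\mapsto \frac1n\int\psi_n\,d\mu$ is affine and continuous, and sub-additivity of $\{\int\psi_n\,d\mu\}$ combined with Fekete's lemma gives
$$\chi_\Psi(\mu)=\inf_{n}\frac{1}{n}\int\psi_n\,d\mu,$$
which, being an infimum of affine functions, is concave on $\cm(X,f)$. Since $\Phi$ is asymptotically sub-additive, $\chi_\Phi$ is the uniform (in $\mu$) limit as $\eta\to 0$ of such $\chi_\Psi$, so $\chi_\Phi$ is concave. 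Applying the same reasoning to $-\Phi$, which is also asymptotically sub-additive by hypothesis, shows that $-\chi_\Phi$ is concave, hence $\chi_\Phi$ is convex as well. Therefore $\chi_\Phi$ is affine.

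Now I invoke the ergodic decomposition. Write $\mu_{\min}=\int_{\cm_e(X,f)}\nu\,d\tau_{\min}(\nu)$, where $\tau_{\min}$ is the unique probability measure on $\cm_e(X,f)$ with barycenter $\mu_{\min}$ (see Section~\ref{sectionmeasure}). Because $\chi_\Phi$ is continuous and affine on the Choquet simplex $\cm(X,f)$, the barycenter identity
$$\chi^\Phi_{\min}=\chi_\Phi(\mu_{\min})=\int_{\cm_e(X,f)}\chi_\Phi(\nu)\,d\tau_{\min}(\nu)$$
holds. Since $\chi_\Phi(\nu)\ge \chi^\Phi_{\min}$ for every $\nu\in\cm_e(X,f)$, equality in the integral forces $\chi_\Phi(\nu)=\chi^\Phi_{\min}$ for $\tau_{\min}$-a.e.\ $\nu$; any such $\nu$ may be taken as $\mu^\Phi_{\min}$. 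Running the identical argument with $\mu_{\max}$ produces $\mu^\Phi_{\max}$.

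The main technical obstacle is the affineness of $\chi_\Phi$, which requires the two-sided asymptotic approximation by sub- and super-additive potentials; the rest reduces to compactness, continuity, and the standard barycentric formula. Note that neither the approximate product property nor asymptotic entropy expansiveness is actually needed here — only continuity of $\chi_\Phi$ (already in Lemma~\ref{chiphicts}) and the simplex structure of $\cm(X,f)$ enter the argument.
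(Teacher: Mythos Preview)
Your proof is correct and supplies exactly the details that the paper omits: the corollary is stated there without proof, as an immediate consequence of Lemma~\ref{chiphicts}. The implicit argument the paper has in mind is precisely yours---continuity on the compact simplex $\cm(X,f)$ gives extremizers, affineness (from the two-sided asymptotic approximation by sub-additive potentials) plus ergodic decomposition upgrades them to ergodic measures---so there is no meaningful difference in approach to report.
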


Lyapunov exponents for asymptotically additive potentials were studied in \cite{FH}
and \cite{TWW}. In \cite{TWW}, it is shown that if $(X,f)$
has periodic gluing orbit property, then for each 
$\al\in \left(\chi^\Phi_{\min},\chi^\Phi_{\max}\right)$, 
there is an ergodic measure $\nu_\al$ of full support such that 
\begin{equation*}
\chi_\Phi(\nu_\al)=\al.
\end{equation*}
We say that $(X,f, \Phi)$ has the \emph{intermediate exponent property}
if for every $\al\in\left[\chi^\Phi_{\min},\chi^\Phi_{\max}\right]$,
there is $\nu_\al$ with $\chi_\Phi(\nu_\al)=\al$.
We realize that that this property follows from the denseness of 
$\cm_e(X,f)$. However, in general we do not know if
an ergodic measures of an intermediate exponent can have full support.

\begin{proposition}\label{pr_iep}
If $\cm_e(X,f)$ is dense in $\cm(X,f)$, then
$(X,f, \Phi)$ has the intermediate exponent property.
\end{proposition}

\begin{proof}
By Proposition \ref{poulsen},
$\cm(X,f)$ is either a singleton or a Poulsen
simplex. We must have that 
$\cm_e(X,f)$ is arcwise
connected. Hence there must be an arc in $\cm_e(X,f)$ that connects $\mu^\Phi_{\min}$ and $\mu^\Phi_{\max}$,
on which 
for every $\al\in\left[\chi^\Phi_{\min},\chi^\Phi_{\max}\right]$
there is an ergodic measure $\nu_\al$ with $\chi_\Phi(\nu_\al)=\al$.
\end{proof}

Corollary \ref{intexpon} follows from 
Corollary \ref{apppoul} and
Proposition \ref{pr_iep}.

\begin{corollary}\label{intexpon} 
Suppose that $(X,f)$ has the
approximate product property and 
$\Phi$ is an asymptotically additive potential for $(X,f)$. 
Then $(X,f, \Phi)$ 
has the intermediate exponent property.
\end{corollary}

\subsection{Pressure-Genericity}\label{subpregen}

For $n\in\ZZ^+$ and $\ep>0$, define
$$P(X, f,\Phi,n,\ep):=\sup\left\{\sum_{x\in S}e^{\phi_n(x)}: 
S\text{ is an $(n,\ep)$-separated
subset of }X\right\}$$
and
$$P(X, f,\Phi,\ep):=\limsup_{n\to\infty}\frac{\ln P(X, f,\Phi,n,\ep)}n.$$
The \emph{topological pressure} of $(X, f,\Phi)$ is given by
$$P(f,\Phi)=P(X,f,\Phi):=\lim_{\ep\to 0}P(X, f,\Phi,\ep)
=\sup_{\ep> 0}P(X, f,\Phi,\ep).$$
For each $\mu\in\cm(X,f)$, the \emph{pressure} of $\Phi$ with respect to $\mu$
is defined as
$$P_\Phi(\mu):=h_\mu(f)+\chi_\Phi(\mu).$$
We have the following Variational Principle.

\begin{proposition}[{\cite[Theorem 3.1]{FH}}]\label{ppvp}
It holds that
$$P(f,\Phi)=\sup\bigl\{P_\Phi(\mu):\mu\in\cm(X,f)\bigr\}
=\sup\bigl\{P_\Phi(\mu):\mu\in\cm_e(X,f)\bigr\}.$$
\end{proposition}

As a generalization
of the classical pressure for a continuous potential (cf. \cite[Chapter 9]{Walters}),
the pressure for an asymptotically additive potential 
shares similar properties.
It is clear that  $P_\Phi(\cdot)$ is affine on $\cm(X,f)$.
By Proposition \ref{chicts}, if the entropy map is 
upper semi-continuous, then so is
$P_{\Phi}(\cdot)$.
\dmph{
and there is $\mu^P_X
\in\cm_e(X,f)$ such that 
\begin{equation}\label{pmax}
P_\Phi\left(\mu^P_X\right)=P(f,\Phi).
\end{equation}
}

By combining the argument in Section \ref{ldsection} and the argument in
\cite[Section 3]{Sunct},
one can directly prove an analog of Proposition \ref{prmain} for pressures.
Here we just show that
it is implied by
entropy-approximability.

\begin{proposition}\label{pr_prap}
Suppose that $\mu\in\cm(X,f)$ is
entropy-approximable, $U$ is a neighborhood of $\mu$,
$\al\in\bigl(\chi_{\Phi}(\mu),P_{\Phi}(\mu)\bigr)$ and
$\beta>0$. Then there 
is
a compact $f$-invariant subset 
$\Lambda$
such that
$$\cm(\Lambda, f)\subset U\text{ and }
\al<P(\Lambda, f, \Phi)<\al+\beta.$$
\end{proposition}

\begin{proof}
We fix 
$$\ep:=\frac13\min\{\al-\chi_\Phi(\mu), P_\Phi(\mu)-\al, \beta\}>0.$$
It follows that
$$0<\al-\chi_\Phi(\mu)+\ep<
P_\Phi(\mu)-3\ep-\chi_\Phi(\mu)+\ep<
P_\Phi(\mu)-\chi_\Phi(\mu)=h_\mu(f).$$
By Proposition \ref{chicts},, there is a neighborhood $U_0$
of $\mu$ such that
$$\left|\chi_\Phi(\nu)-\chi_\Phi(\mu)\right|<\ep
\text{ for every }\nu\in U_0.$$
As $\mu$ is entropy-approximable, there is a compact $f$-invariant
set $\Lambda$ such that
$$\cm(\Lambda, f)\subset U\cap U_0\subset U$$
and
$$
\al-\chi_\Phi(\mu)+\ep<h(\Lambda, f)<\al-\chi_\Phi(\mu)+2\ep.$$
Then by Proposition \ref{ppvp}, we have
\begin{align*}
P(\Lambda, f, \Phi)
&=\sup\bigl\{h_\nu(f)+\chi_\Phi(\nu):\nu\in\cm(\Lambda,f)\bigr\}
\\&\ge\sup\bigl\{h_\nu(f):\nu\in\cm(\Lambda,f)\bigr\}
+\inf\bigl\{\chi_\Phi(\nu):\nu\in\cm(\Lambda,f)\bigr\}
\\&> h(\Lambda,f)+\bigl(\chi_\Phi(\mu)-\ep\bigr)
\\&>\al
\end{align*}
and
\begin{align*}
P(\Lambda, f, \Phi)
&=\sup\bigl\{h_\nu(f)+\chi_\Phi(\nu):\nu\in\cm(\Lambda,f)\bigr\}
\\&\le\sup\bigl\{h_\nu(f):\nu\in\cm(\Lambda,f)\bigr\}
+\sup\bigl\{\chi_\Phi(\nu):\nu\in\cm(\Lambda,f)\bigr\}
\\&< h(\Lambda,f)+\bigl(\chi_\Phi(\mu)+\ep\bigr)
\\&<\al+3\ep
\\&=\al+\beta.
\end{align*}
\end{proof}


Conclusion \eqref{thit3}
of Theorem \ref{thmain}
can be proved in two ways:
either as a consequence of entropy-approximability and
Proposition \ref{pr_prap}, with
an argument analogous to the proof of Proposition \ref{pr_eg};
or as a consequence of entropy-genericity and
Proposition \ref{presgen}. We shall present the latter.

\begin{lemma}\label{presdense}
Suppose that $(X,f)$ is entropy-generic,
$h(f)>0$ and
$$P_{\inf}(f,\Phi)<\al<P(f,\Phi).$$ Then
for any $\al'>\al$, the set
$$\spp_e(\al,\al'):=\{\mu\in\cm_e(X,f):\chi_\Phi(\mu)\le\al\le P_\Phi(\mu)<\al'\}$$
is dense in $\spp^\al(X,f,\Phi)$.
\end{lemma}

\begin{proof}
Recall that
$$\spp^\al(X,f,\Phi)=\{\mu\in\cm(X,f):\chi_\Phi(\mu)\le\al\le P_\Phi(\mu)\}.$$
Let $\mu_0\in\spp^\al(X,f,\Phi)$ and $\eta_0>0$. We need to show that
$$\spp_e(\al,\al')\cap B(\mu_0,\eta_0)\ne\emptyset.$$

By Proposition \ref{ppvp}, 
we can fix $\mu_M,\mu_m\in\cm(X,f)$
such that 
$$P_\Phi(\mu_m)<\al<P_\Phi(\mu_M).$$

Our discussion splits into the following cases: 

\begin{enumerate}[\bf {Case }1.]
\item\label{c1} 
Suppose that $\chi_\Phi(\mu_0)<\al<P_\Phi(\mu_0)$.

We fix 
$$\ep:=\dfrac12\min\bigl\{\al-\chi_\Phi(\mu_0),
\al'-\al, P_\Phi(\mu_0)-\al\bigr\}>0.$$
By Proposition \ref{chicts},
there is $\eta_1\in(0,\eta_0)$ such that
$$|\chi_\Phi(\mu)-\chi_\Phi(\mu_0)|<\ep
\text{ for every }\mu\in B(\mu_0,\eta_1).$$
As $(X,f)$ is entropy-generic, there is an ergodic measure 
$\nu\in B(\mu_0,\eta_1)$
such that
$$h_{\nu}(f)=\al-\chi_\Phi(\mu_0)+\ep\in\bigl(0,h_{\mu_0}(f)\bigr).$$
It follows that
$$\chi_\Phi(\nu)<\chi_\Phi(\mu_0)+\ep<\al$$
and
$$\al<P_\Phi(\nu)=\chi_\Phi(\nu)+
\bigl(\al-\chi_\Phi(\mu_0)+\ep\bigr)
<\al'.
$$
Hence we have $\nu\in\spp_e(\al,\al')\cap B(\mu_0,\eta_0)$.

\item\label{c2}
Suppose that  $\chi_\Phi(\mu_0)=\al<P_\Phi(\mu_0)$.

There is $\eta_1\in\left(0,\dfrac{\eta_0}2\right)$ such that
$$P_\Phi(\mu_1)>\al\text{ for }
\mu_1:=\left(1-\frac{\eta_1}{D^*}\right)\mu_0+\frac{\eta_1}{D^*}\mu_m.$$
As $\al>P_\Phi(\mu_m)\ge\chi_\Phi(\mu_m)$, we have
$$\mu_1\in B\left(\mu_0,\frac{\eta_0}{2}\right)\text{ and }\chi_\Phi(\mu_1)<\al<P_\Phi(\mu_1).$$
Apply the argument for Case \ref{c1}. We can find
$$\nu\in\spp_e(\al,\al')\cap B\left(\mu_1,\frac{\eta_0}2\right)
\subset\spp_e(\al,\al')\cap B(\mu_0,\eta_0).$$

\item\label{c3}
Suppose that  $\chi_\Phi(\mu_0)<\al=P_\Phi(\mu_0)$.

There is $\eta_1\in\left(0,\dfrac{\eta_0}2\right)$ such that
$$\chi_\Phi(\mu_1)<\al\text{ for }
\mu_1:=\left(1-\frac{\eta_1}{D^*}\right)\mu_0+\frac{\eta_1}{D^*}\mu_M.$$
As $P_\Phi(\mu_0)=\al<P_\Phi(\mu_M)$, we have
$$\mu_1\in B\left(\mu_0,\frac{\eta_0}{2}\right)
\text{ and }\chi_\Phi(\mu_1)<\al<P_\Phi(\mu_1).$$
Apply the argument for Case \ref{c1}.
We can find
$$\nu\in\spp_e(\al,\al')\cap B\left(\mu_1,\frac{\eta_0}2\right)
\subset\spp_e(\al,\al')\cap B(\mu_0,\eta_0).$$

\item\label{c4}
Suppose that  $\chi_\Phi(\mu_0)=\al=P_\Phi(\mu_0)$ and
$\al\ge\chi_\Phi(\mu_M)$.

Let
$$\mu_1:=\left(1-\frac{\eta_0}{3D^*}\right)\mu_0+\frac{\eta_0}{3D^*}\mu_M.$$
As $\al=P_\Phi(\mu_0)<P_\Phi(\mu_M)$, we have 
$$\mu_1\in B\left(\mu_0,\frac{\eta_0}2\right)\text{ and }\chi_\Phi(\mu_1)\le\al<P_\Phi(\mu_1).$$
Apply the argument for Case \ref{c1} if $\chi_\Phi(\mu_1)<\al$, 
or the one for Case \ref{c2} if $\chi_\Phi(\mu_1)=\al$.
We can find
$$\nu\in\spp_e(\al,\al')\cap B\left(\mu_1,\frac{\eta_0}2\right)
\subset\spp_e(\al,\al')\cap B\left(\mu_0,\eta_0\right).$$

\item\label{c5}
Suppose that  $\chi_\Phi(\mu_0)=\al=P_\Phi(\mu_0)$ and $\al<\chi_\Phi(\mu_M)$.

As $h(f)>0$, there is $\mu'\in\cm(X,f)$ such that $h_{\mu'}(f)>0$.
Then there is $\kappa\in(0,1)$ such that
$$\chi_\Phi(\mu_1)<\al\text{ for }
\mu_1:=(1-\kappa)\mu_M+\kappa\mu'.$$
We also have $h_{\mu_1}(f)>0$. 
As $\chi_\Phi(\mu_1)<\al<\chi_\Phi(\mu_M)$,
there is $\lambda\in(0,1)$ such that
$$\chi_\Phi(\mu_2)=\al\text{ for }\mu_2:=\lambda\mu_1+(1-\lambda)\mu_M.$$
Again, we have $h_{\mu_2}(f)>0$. It follows that
$$P_\Phi(\mu_2)=h_{\mu_2}(f)+\chi_\Phi(\mu_2)>\al.$$
Let
$$\mu_3:=\left(1-\frac{\eta_0}{3D^*}\right)\mu_0+
\frac{\eta_0}{3D^*}\mu_2.$$
We have
$$\mu_3\in B\left(\mu_0,\frac{\eta_0}{2}\right), 
\chi_\Phi(\mu_3)=\al
\text{ and }P_\Phi(\mu_3)>\al.$$
Apply the argument for Case \ref{c2}. We can find
$$\nu\in\spp_e(\al,\al')\cap B\left(\mu_3,\frac{\eta_0}2\right)
\subset\spp_e(\al,\al')\cap B(\mu_0,\eta_0).$$
\end{enumerate}

\end{proof}

\begin{proposition}\label{presgen}
Suppose that $(X,f)$ is entropy-generic,
the entropy map is upper semi-continuous 
and $h(f)>0$.
Then $(X,f,\Phi)$ is pressure-generic.
\end{proposition}

\begin{proof}
Suppose that we are given
$\al\in\bigl(P_{\inf}(f,\Phi),P(f,\Phi)\bigr)$.

There are $\mu_1,\mu_2\in\cm(X,f)$
such that 
$$P_\Phi(\mu_1)<\al<P_\Phi(\mu_2).$$ 
Then there is $\kappa\in(0,1)$
such that
$$P_\Phi(\mu_3)=\al\text{ for }\mu_3:=\kappa\mu_1+(1-\kappa)\mu_2.$$
It follows that $\chi_\Phi(\mu_3)\le P_\Phi(\mu_3)=\al$ and
$\mu_3\in\spp^\al(X, f, \Phi)$. This shows that $\spp^\al(X, f, \Phi)$
is nonempty.

As the entropy map is upper semi-continuous, so is $P_\Phi$.
Continuity of $\chi_\Phi$ and upper semi-continuity of $P_\Phi$
guarantee that
$$\spp^\al(X, f, \Phi)=\{\mu\in\cm(X,f):\chi_\Phi(\mu)\le\al\}
\cap \{\mu\in\cm(X,f):\al\le P_\Phi(\mu)\}$$
is a closed set in $\cm(X,f)$. So $\spp^\al(X, f, \Phi)$
is a compact subspace.

Let $\al'>\al$.
It also follows from upper semi-continuity of $P_\Phi$
that 
$$\spp_{\al'}:=\{\mu\in\cm(X,f): P_\Phi(\mu)<\al'\}$$
is an open set in $\cm(X,f)$. 
As $\cm_e(X,f)$ is a $G_\delta$ subset of $\cm(X,f)$,
we have that
$$\spp_e(\al,\al')=\cm_e(X,f)\cap\spp_{\al'}\cap\spp^\al(X, f, \Phi)$$
is a $G_\delta$ subset of $\spp^\al(X, f, \Phi)$.
Hence by Lemma \ref{presdense}, $\spp_e(\al,\al')$
is residual in  $\spp^\al(X, f, \Phi)$.
It follows that
$$\spp_e(X, f, \Phi, \al)=
\bigcap_{k=1}^\infty \spp_e(\al,\al+\frac1k)$$
is also residual in $\spp^\al(X, f, \Phi)$.
\end{proof}

Conclusion \eqref{thit3}
of Theorem \ref{thmain}
follows from Conclusion \eqref{thit2} and Proposition \ref{presgen} when
$h(f)>0$. When $h(f)=0$, the approximate product property implies that
$(X,f)$ is uniquely ergodic \cite{Sununierg} and the conclusion holds
trivially.

\subsection{
More Consequences
}

In this subsection we discuss two consequences of
pressure-genericity and entropy-genericity.
As corollaries of Theorem \ref{thmain},
Conclusions  \eqref{itnb2} and  \eqref{itnb3} of Corollary \ref{cor_nbhd}
follow from Proposition
\ref{localintpr} and Proposition \ref{pinfentzero},
respectively.




Analogous to Proposition \ref{localintent}, pressure-genericity implies
that there are ergodic measures of intermediate pressures in every neighborhood.

\begin{proposition}\label{localintpr}
If $(X,f)$ is pressure-generic, then
for every $\mu\in\cm(X,f)$ and every neighborhood $U$ of $\mu$, we have
        $$\begin{cases}
        \cP(X,f,U)\supset\bigl[\chi_\Phi(\mu), P_\Phi(\mu)\bigr],
         & \text{if }P_\Phi(\mu)<P(f,\Phi);\\
        \cP(X,f,U)\supset\bigl[\chi_\Phi(\mu), P_\Phi(\mu)\bigr), & \text{if }P_\Phi(\mu)=P(f,\Phi).
        \end{cases}$$
\end{proposition}


\begin{proof}
There is $\eta>0$ such that $B(\mu,2\eta)\subset U$.

Suppose that 
$$\chi_\Phi(\mu)\le\al<P_\Phi(\mu)\le P(\Phi,f).$$
Then $\spp_e(X,f,\Phi,\al)$ is residual in $\spp^\al(X,f,\Phi)$. 
There is an ergodic measure
$$\nu\in B(\mu,\eta)\cap\spp^\al(X,f,\Phi)\subset U$$
such that $P_\Phi(\nu)=\alpha$. 
This implies that 
\begin{equation}\label{eqlocalpres}
\cP(X,f,U)\supset\bigl[\chi_\Phi(\mu), P_\Phi(\mu)\bigr).
\end{equation}

Suppose that $P_\Phi(\mu)<P(f,\Phi)$.
By Proposition \ref{ppvp}, there is $\mu_0\in\cm(X,f)$ such that 
$P_\Phi(\mu_0)>P_\Phi(\mu)$. Denote
$$\mu':=\left(1-\frac{\eta}{D^*}\right)\mu+\frac{\eta}{D^*}\mu_0
\in
B(\mu,2\eta)\subset
U.$$
We still have $P_\Phi(\mu')>P_\Phi(\mu)$.
It follows from \eqref{eqlocalpres} that
$$\cH(X,f,U)\supset[0, h_{\mu'}(f))\supset[0, h_\mu(f)].$$
\end{proof}

If $(X,f)$ is entropy-generic, then
$$\cm(X,f,0):=\bigl\{\mu\in\cm(X,f): h_\mu(f)=0\bigr\}$$
is dense in $\cm(X,f)$.
In this case, the infimum of $P_\Phi$
can only be obtained at 
a measure of zero entropy. 

\begin{proposition}\label{pinfentzero}
Suppose that $\cm(X,f,0)$ is dense in $\cm(X,f)$.
Then we have
        \begin{equation*}
        P_{\inf}(f,\Phi)=\chi^\Phi_{\min}.
        \end{equation*}
         Moreover, if there is $\mu_0\in\cm(X,f)$ such that
        $P_\Phi(\mu_0)=P_{\inf}(f,\Phi)$,
        then 
        $$h_{\mu_0}(f)=0\text{ and }\chi_\Phi(\mu_0)=\chi^\Phi_{\min}.$$
\end{proposition}

\begin{proof}

Suppose that $P_{\inf}(f,\Phi)>\chi^\Phi_{\min}$. 
Let $\mu_{\min}^{\Phi}$ be as in Corollary \ref{corchimin}.
Then we have
\begin{align*}
h_{\mu_{\min}^{\Phi}}(f)=P_\Phi\left(\mu_{\min}^{\Phi}\right)-\chi_\Phi\left(\mu_{\min}^{\Phi}\right)
\ge P_{\inf}(f,\Phi)-\chi^\Phi_{\min}>0.
\end{align*}
By Proposition \ref{chicts}, there is
a neighborhood $U$ of  $\mu_{\min}^{\Phi}$ such that
$$\left|\chi_\Phi(\nu)-\chi^\Phi_{\min}\right|<\frac12h_{\mu_{\min}^{\Phi}}(f)
\text{ for every }\nu\in U.$$
As $\cm(X,f,0)$ is dense, there is $\nu_0\in U$ with $h_{\nu_0}(f)=0$.
It follows that
$$P_\Phi(\nu_0)=\chi_\Phi(\nu_0)+h_{\nu_0}(f)<P_{\inf}(f,\Phi),$$
which contradicts with the definition of $P_{\inf}(f,\Phi)$.

Now suppose that there is $\mu_0\in\cm(X,f)$ such that
        $P_\Phi(\mu_0)=P_{\inf}(f,\Phi)$.
It follows that
\begin{equation*}
h_{\mu_0}(f)=P_\Phi(\mu_0)-\chi_\Phi(\mu_0)
\le P_{\inf}(f,\Phi)-\chi^\Phi_{\min}=0\le h_{\mu_0}(f),
\end{equation*}
in which we must have equalities. This implies that
$h_{\mu_0}(f)=0$ and $\chi_\Phi(\mu_0)=\chi^\Phi_{\min}$.
\end{proof}


\section{
Examples}\label{secexamp}

\begin{example}\label{nomaxent}
In \cite{Pavlov}, Pavlov showed that there is a subshift that has the tempered specification
property with a specified gap function and
has exactly two ergodic measures whose supports are disjoint.
In \cite{KOR}, Kwietniak, Oprocha and Rams constructed a one-sided shift $(X_2,\sigma)$ that has tempered
specification property and multiple but finitely many ergodic measures of maximal
entropy.
Both examples have the approximate product property. 
We note that if $\mu_1,\mu_2$ are two distinct ergodic measures of maximal entropy
for $(X_2,\sigma)$, then there is 
a neighborhood of $\frac{\mu_1+\mu_2}{2}$ that
contains no ergodic measures of maximal
entropy. 
This indicates that the conclusion of Proposition \ref{localintent} is optimal.
\end{example}

\begin{example}\label{noappnothm}
The following example highlights the difference between
entropy-denseness and entropy-genericity.
Let $(X_0,\sigma_0)$ be a subshift constructed in \cite{HK} which is strictly ergodic and of positive
topological entropy. 
Let $\sigma$ be the full shift on $\Sigma:=\{0,1\}^{\NN}$.
Let $(X,f)$ be the direct product of $(X_0,\sigma_0)$ and $(\Sigma,\sigma)$.
Then every
invariant measure for $(X,f)$ is a direct product of the 
unique ergodic measure for $(X_0,\sigma_0)$ and an
invariant measure for the full shift.
The system $(X,f)$ is expansive and entropy-dense, while
$\cm(X,f)$ is a Poulsen simplex.
But $(X,f)$ does not have ergodic measures whose entropies are less than $h(X_0,\sigma_0)$.
\end{example}

\begin{example}
\label{expinf}
Let $(\Sigma,\sigma)$ be the full shift and
$X_0$ be the closed subset of $\Sigma$, as in Example \ref{noappnothm},
that supports a unique ergodic measure $\mu_0$ with $h_{\mu_0}(\sigma)>0$.
Let $\varphi:\Sigma\to\RR$ be a continuous function such that
$$\begin{cases}
\varphi(x)=0, &\text{ if }x\in X_0;\\
\varphi(x)>0, &\text{ if }x\notin X_0.
\end{cases}$$
Let $\Phi:=\left\{\sum_{k=0}^{n-1}\varphi\circ \sigma^{k}\right\}_{n=1}^\infty$.
Then for every $\mu\in\cm(\Sigma,\sigma)$, no matter if $\mu$ is supported on $X_0$
or not, we always have $P_\Phi(\mu)>0$. However, Theorem \ref{thmain} applies
to $(X,\sigma)$ and there is a sequence $\{\nu_n\}_{n=1}^\infty$
of zero entropy that converges to $\mu_0$. It follows that
$$P_\Phi(\nu_n)=\chi_\Phi(\nu_n)\to\chi_\Phi(\mu_0)=0$$
and $P_{\inf}(\sigma,\Phi)=0$.
The infimum of $P_\Phi$ can not be assumed.
This example also shows that
pressure-genericity may
fail for $\al=P_{\inf}(f,\Phi)$, as in this case
we have $\spp(\Sigma,\sigma,\Phi,0)=\emptyset$, which
is not generic in
$\spp^0(\Sigma,\sigma,\Phi)=\{\mu_0\}$.
\end{example}

\begin{example}
\label{exnoneq}
The following is an example of a zero-entropy system that has
the approximate product property but is not minimal.
The idea 
is due to Dominik Kwietniak.
See also \cite[Theorem 7.1]{Kwiet}.

Suppose that $X_1\subset\{0,1\}^{\NN}$ and 
$L:\ZZ^+\to\ZZ^+$ be a tempered function
such that 
$$L(1)=1,\lim_{n\to\infty}{L(n)}=\infty$$
and for every $m\in\NN$, every $n\in\ZZ^+$, every $\{w_k\}_{k\in\NN}\in X_1$, we have
$$|\{m\le k<m+n:w_k=1\}|\le L(n).$$
For example, 
we may take
$$L(n)=\lfloor{1+\ln n}\rfloor\text{ for each }n.$$
The subshift on $X_1$ has the approximate product property. In fact, 
under the metric
$$d(\{w_k\}_{k\in\NN},\{w'_k\}_{k\in\NN})=2^{-\min\{k:w_k\ne w'_k\}},$$
for every $\delta_1,\delta_2,\ep>0$,
there are $N\in\ZZ^+$ such that $2^{-N}<\ep$ and $M$ such that 
$$L({M+N})N<\delta_2M.$$ 
Then 
 every sequence $\sC$ in $X_1$ is $(n, \delta_1, \delta_2,
\{(k-1)M\}_{k=1}^\infty,
\ep)$-traced by the fixed point $\{0\}^{\NN}\in X_1$.
Moreover, we have
$$s(X_{1},n,\ep)\le Q\left(n+N,\frac{L({n+N})}{n+N}\right).$$
Then by Lemma \ref{qndelta} one can show that $h(X_1,\sigma)=0$.
Moreover, $(X_1,\sigma)$ is a hereditary subshift. It is mixing
and non-invertible.
\end{example}


\begin{example}
\label{exnontrans}
Let $X_1\subset\{0,1\}^{\NN}$ and 
$L:\ZZ^+\to\ZZ^+$
be as in Example \ref{exnoneq}.
Let $X_2\subset\{0,2\}^{\NN}$ such that for every $m\in\ZZ$, every $n\in\ZZ^+$, 
every $\{w_k\}_{k\in\ZZ}\in X_2$, we have
$$|\{m\le k<m+n:w_k=2\}|\le L(n).$$
Then the subshift on $X_1\cup X_2$  has the approximate product property as every sequence can be traced by $\{0\}^{\NN}$. The subshift is not topologically transitive and still has zero topological entropy. 
\end{example}

\begin{example}\label{nouniver}
Let $(X,f)$ be the direct product of the irrational rotation and the full shift. Then $(X,f)$ has the gluing
orbit property and is entropy expansive. Hence our results apply to it.  By \cite[Section 3.5]{QS}, this system $(X,f)$ is not universal.
This indicates that the intermediate entropy property is strictly weaker
than universality (which holds for systems
with the tempered specification property), 
hence should not be expected for broader classes of systems.
\end{example}

\begin{example}\label{noninvt}
Note that a 
systems with the approximate product property is not necessarily invertible.
The following is a non-symbolic example from \cite{BTV}.
Let $X:=\mathbb{T}^2$ and
$$f(x,y):=(2x \mod 1, x+y \mod 1)\text{ for every }(x,y)\in\mathbb{T}^2.$$
It is shown in \cite{BTV} that $(X,f)$ has the gluing orbit property. 
As $(X,f)$ is $C^\infty$, by \cite[Theorem 2.2]{Buzzi},
it is also asymptotically entropy expansive. 
Our results apply to this system.
\end{example}

\section*{Acknowledgments}
The author is supported by National Natural Science Foundation of China (No. 11571387)
and CUFE Young Elite Teacher Project (No. QYP1902).
The author 
would like to thank
Weisheng Wu, Daniel J. Thompson, Dominik Kwietniak, Gang Liao
and Jian Li
for fruitful discussions and helpful comments.



\end{document}